\providecommand{\U}[1]{\protect\rule{.1in}{.1in}}
\newtheorem{criterion}[theorem]{Criterion}
\begin{document}

\journalname{Mathematics of Control, Signals, and Systems}
\title{Algebraic Invariance Conditions in the Study of Approximate
(Null-)Controllability of Markov Switch Processes }
\author{Dan Goreac
\and Miguel Martinez}
\institute{Universit\'{e} Paris-Est, LAMA (UMR 8050), UPEMLV, UPEC, CNRS, 77454
Marne-la-Vall\'{e}e, France Corresponding author, Email :
Dan.Goreac@univ-mlv.fr, Tel. : +33 (0)1 60 95 75 27, Fax : +33 (0)1 60 95 75
45, Email : Miguel.Martinez@univ-mlv.fr}
\maketitle

\begin{abstract}
We aim at studying approximate null-controllability properties of a particular
class of piecewise linear Markov processes (Markovian switch systems). The
criteria are given in terms of algebraic invariance and are easily computable.
We propose several necessary conditions and a sufficient one. The hierarchy
between these conditions is studied via suitable counterexamples. Equivalence
criteria are given in abstract form for general dynamics and algebraic form
for systems with constant coefficients or continuous switching. The problem is
motivated by the study of lysis phenomena in biological organisms and price
prediction on spike-driven commodities.

\end{abstract}

\section{Introduction}

This paper focuses on a particular class of controlled piecewise deterministic
Markov processes (PDMP) introduced in \cite{Davis_84}, see also
\cite{davis_93}. Namely, we are interested in the approximate (null-)
controllability of switch processes. A switch process is often used to model
various aspects in biology (see \cite{cook_gerber_tapscott_98},
\cite{crudu_debussche_radulescu_09}, \cite{Wainrib_Thieullen_Pakdaman_2010},
\cite{crudu_Debussche_Muller_Radulescu_2012}, \cite{G8}), reliability or
storage modelling (in \cite{Boxma_Kaspi_Kella_Perry_2005}), finance (in
\cite{Rolski_Schmidli_2009}), communication networks (\cite{graham2009}), etc.
We can describe these processes as having two components denoted $\left(
\gamma,X\right)  $. In our framework, the mode component $\gamma$ evolves as a
pure jump Markov process and cannot be controlled. It corresponds to spikes
inducing regime switching. The second component $X$ obeys a controlled linear
stochastic differential equation (SDE) with respect to the compensated random
measure associated to $\gamma$. The linear coefficients of this controlled SDE
also depend on the current mode $\gamma.$ We are concerned with a problem
fitting the framework of controllability for stochastic jump systems. The main
aim of the paper is to exhibit explicit algebraic conditions on the linear
coefficients under which, given a time horizon $T>0,$ for every initial
configuration $\left(  \gamma_{0},x_{0}\right)  $ the $X_{T}$ component can be
steered arbitrarily close to acceptable targets (random variables, adapted to
the filtration generated by the random measure).

Let us briefly explain the applications we have in mind. The first class of
applications comes from the theory of genetic applets. The simplest model is
the one introduced in \cite{cook_gerber_tapscott_98} in connection to gene
expression. The model can be resumed by the following diagram.%
\[%
\begin{tabular}
[c]{|l|}\hline
\textbf{G}\\\hline
\end{tabular}
\ \ \ \ \ \ \underset{k_{d}}{\overset{k_{a}}{\rightleftarrows}}%
\begin{tabular}
[c]{|l|}\hline
\textbf{G*}\\\hline
\end{tabular}
\ \ \ \ \ \ \overset{B;C}{\rightarrow}%
\begin{tabular}
[c]{||l||}\hline\hline
\textbf{X}\\\hline\hline
\end{tabular}
\ \ \ \ \ \ \overset{A}{\rightarrow}%
\]
It reduces the gene expression phenomenon to a single gene which toggles
between an inactive state $G$ and an active one $G^{\ast}$ and to a set of
proteins $X.$ Activation and deactivation occur at different rates $k_{a}$ and
$k_{d}$. Thus, the mode component $\gamma$ is given by a random process
toggling between active and inactive at specific rates. When active, the gene
can express (some of) the products $X.$ We can have a continuous expression at
some rate (matrix) $B$ per time unit or a unique burst rate proportional to
the protein concentration. The products degrade at some rate $A.$ Various
external factors (e.g. catalyzers, temperature, generically denoted by $u$)
can lead to the enhancement of continuous expression and a first order
approximation would lead to an expression term $B(\gamma)u$ ($Bu$ if the gene
is active, $0$ otherwise)$.$ The classical method in analytical chemistry
would lead to considering a piecewise differential behavior for the proteins
concentration $\overset{\cdot}{X}_{t}=-A\left(  \gamma_{t}\right)
X_{t}+B(\gamma_{t})u_{t}$ and, eventually a jump of intensity $C\left(
\gamma_{t-}\right)  X_{t-},$ hence leading to a switch model. In this kind of
simple model, the proteins act as their own regulator and a null concentration
of proteins leads to the death of the system. Our method gives simple
algebraic conditions on the model ($A\left(  \gamma\right)  ,$ $B\left(
\gamma\right)  $ and $C\left(  \gamma\right)  $) under which the system can be
controlled from given protein concentrations to cellular death in a given
finite time horizon.

A second motivation comes from mathematical finance. Let us suppose that a
controller needs to "predict" a vector price of some derivatives on some
commodity (say energy) starting from previous predictions $x_{0}$ (whose
precision is unknown). The fluctuations in the commodity are observed as
spikes (the $\gamma$ component) leading to a pure jump process. As long as the
commodity does not change, the price process $X$ is imposed to have a linear
behavior (induced by the interest rate, for example) proportional to some
$A\left(  \gamma\right)  $ matrix. The controller following the market
information imposes a continuous linear correction term ($B$). When a spike is
noticed, the price $X$ is multiplied by some coefficient ($I+C$)$.$ Given a
finite time horizon $T$, the actual price at time $T$ can be any random
variable with respect to the information given by the commodity at time $T.$
The idea is to give explicit conditions on the contract terms $A,B,C$ under
which, independently of the initial prediction $x_{0},$ the controller is able
to reach the actual price (or, to be arbitrarily close to it).

In the finite-dimensional deterministic setting, exact controllability of
linear systems has been characterized by the so-called Kalman criterion.
Alternatively, one studies the dual notion of observability via Hautus' test
as in \cite{Hautus}. These conditions can also be written in terms of
algebraic invariance of appropriate linear subspaces and the notions easily
adapt to infinite dimensional settings in \cite{Schmidt_Stern_80},
\cite{Curtain_86}, \cite{russell_Weiss_1994}, \cite{Jacob_Zwart_2001},
\cite{Jacob_Partington_2006}, etc.

In the stochastic setting, a duality approach to the different notions of
controllability would lead to backward stochastic differential equations (in
short BSDE introduced in \cite{Pardoux_Peng_90}). This method allows one to
characterize the exact (terminal-) controllability of Brownian-driven control
systems via a full rank condition (see \cite{Peng_94}). Whenever this
condition is not satisfied, one characterizes the approximate controllability
and approximate null-controllability using invariance-like criteria (see
\cite{Buckdahn_Quincampoix_Tessitore_2006} for the control-free noise setting
and \cite{G17} for the general Brownian setting). In the case when the
stochastic perturbation is of jump-type, exact controllability cannot be
achieved (as consequence of the incompleteness; see \cite{Merton_76}). Hence,
the "good" notions of controllability are approximate (resp. approximate
null-) ones. For Brownian-driven control systems, the two approximate
controllability notions are equivalent (cf. \cite{G17}). This is no longer the
case if the system has an infinite-dimensional component (see \cite{G1}
treating mean-field Brownian-driven systems). Various methods can be employed
in infinite-dimensional state space Brownian setting leading to partial
results (see \cite{Fernandez_Cara_Garrido_atienza_99},
\cite{Sarbu_Tessitore_2001}, \cite{Barbu_Rascanu_Tessitore_2003}, \cite{G16}).
For jump systems, BSDE have first been considered in
\cite{BarlesBuckdahnPardoux} in a Brownian and Poisson random measure setting.
Using this tool, the approach of \cite{Buckdahn_Quincampoix_Tessitore_2006}
generalizes to a L\'{e}vy setting (see \cite{G10}). BSDE driven by random
measures are studied in \cite{Xia_00}, while backward systems driven by marked
point processes make the object of \cite{Confortola_Fuhrman_2013},
\cite{Confortola_Fuhrman_2014}, \cite{Confortola_Fuhrman_Jacod_2014}.

In this paper, we are interested in the study of approximate controllability
properties for a particular class of piecewise deterministic Markov processes
(introduced in all generality in \cite{Davis_84}), namely Markovian systems of
switch type. We propose explicit algebraic invariance conditions which are
necessary or sufficient for the approximate (null-) controllability of this
class of jump systems. We emphasize that these algebraic conditions are easily
computable. We propose several examples and counterexamples illustrating the
necessary and sufficient conditions in all generality. Explicit equivalent
criteria are obtained for two particular classes of systems having either
constant coefficients or continuous switching. The approach relies on duality,
backward stochastic differential equations techniques and Riccati systems.
Although, in all generality, only partial conditions are given, the main
theoretical contribution of the paper is that it considers non-constant
coefficients and a framework in which the jump intensity depends on the trajectory.

In Section \ref{SectionGeneralSwitch} we recall some elements on Markov
pure-jump processes governing the mode component and piecewise-linear switch
processes. Using the controllability operators and the adjoint BSDE, we give a
first (abstract) criterion for approximate and approximate
null-controllability in Theorem \ref{TheoremCar}. The study of algebraic
invariance characterizations for the approximate null-controllability forms
the objective of Section \ref{SectionExplicitInvariance}. The previously
proven abstract tool allows us to obtain two necessary conditions for
approximate null-controllability in Propositions \ref{PropNec1} and
\ref{PropNec2} as well as a sufficient criterion in Proposition \ref{PropSuf1}%
. These conditions are explicit and involve invariance or strict invariance
algebraic notions. By means of counterexamples in the framework of bimodal
switch systems, we show in Section \ref{SectionExpEquiv} that no hierarchy can
be established between these necessary conditions and/or the Kalman criterion
for the control of the associate deterministic system. We exhibit a
two-dimensional example in which the necessary condition of Proposition
\ref{PropNec1} fails to imply the deterministic Kalman condition and, hence,
may not be sufficient for general controllability (see Example
\ref{Nec1notDetCtrl}). Even if both the necessary condition of Proposition
\ref{PropNec1} and Kalman deterministic condition are satisfied, the system
may fail to satisfy the condition of Proposition \ref{PropNec2}, and, thus,
might not be approximately null-controllable (see Example
\ref{Nec1+DetCtrlnotNec2}). In a similar way, the necessary condition of
Proposition \ref{PropNec2} and Kalman deterministic condition do not (in all
generality) imply the condition of Proposition \ref{PropNec1} (see Example
\ref{Nec2+DetCtrlnotNec1}). In the framework of systems with constant
coefficients (Subsection \ref{SubsectionConstantCoeff}), we prove that the
necessary condition of Proposition \ref{PropNec2} is also sufficient. The
basic idea is to give a deterministic characterization for the local viability
kernel of $KerB^{\ast}$ by means of linear-quadratic control techniques in
Proposition \ref{PropViablocKerB*} which allows to weaken the sufficient
condition (see Criterion \ref{CritEquiv}). This is made possible by Riccati
techniques which, in this particular setting, concern deterministic equations
(instead of backward Riccati stochastic systems). We prove that for this class
of systems, the necessary and sufficient condition (given in Criterion
\ref{CritEquiv}) can be weaker than the sufficient one in Proposition
\ref{PropSuf1} (see Example \ref{CtrlnotSuf1}). The second class for which we
can achieve complete description of approximate null-controllability is that
of continuous linear systems of switch type. This forms the objective of
Subsection \ref{SubsectionContSwitching}. The proof of the equivalence
Criterion \ref{CritContSwitch} can be easily extended to more general systems
in which the mode is equally influenced by the continuous component. It relies
on explicit construction of stabilizing open-loop control processes. Finally,
we give a hint on work to come in Section \ref{SectionConclusionsPerspectives}.

\section{A Coefficient-Switch Markov Model\label{SectionGeneralSwitch}}

\subsection{Markov Jump Processes\label{SectionConstruction}}

We briefly recall the construction of a particular class of Markov pure jump,
non explosive processes on a space $\Omega$ and taking their values in a
metric space $\left(  E,\mathcal{B}\left(  E\right)  \right)  .$ Here,
$\mathcal{B}\left(  E\right)  $ denotes the Borel $\sigma$-field of $E.$ The
elements of the space $E$ are referred to as modes. These elements can be
found in \cite{davis_93} in the particular case of piecewise deterministic
Markov processes; see also \cite{Bremaud_1981}. To simplify the arguments, we
assume that $E\subset%
\mathbb{R}
^{m}$, for some $m\geq1$. The process is completely described by a couple
$\left(  \lambda,Q\right)  $

(i) a Lipschitz continuous jump rate $\lambda:E\longrightarrow%
\mathbb{R}
_{+}$ such that $\underset{\theta\in E}{\sup}\left\vert \lambda\left(
\theta\right)  \right\vert \leq c_{0}$ and

(ii) a transition measure $Q:E\longrightarrow\mathcal{P}\left(  E\right)  $,
where $\mathcal{P}\left(  E\right)  $ stands for the set of probability
measures on $\left(  E,\mathcal{B}\left(  E\right)  \right)  $ such that
\[
\left.
\begin{array}
[c]{l}%
(ii1)\text{ }Q\left(  \gamma,\left\{  \gamma\right\}  \right)  =0,\\
(ii2)\text{ for each bounded, uniformly continuous }h\in BUC\left(
\mathbb{R}
^{m}\right)  ,\text{ }\\
\text{ there exists }\eta_{h}:%
\mathbb{R}
\longrightarrow%
\mathbb{R}
_{+}\text{ s.t. }\eta_{h}\left(  0\right)  =0\text{ and }\\
\left\vert \int_{E}h\left(  \theta\right)  Q\left(  \gamma,d\theta\right)
-\int_{E}h\left(  \theta\right)  Q\left(  \gamma^{\prime},d\theta\right)
\right\vert \leq\eta_{h}\left(  \left\vert \gamma-\gamma^{\prime}\right\vert
\right)  .
\end{array}
\right.
\]
(The distance $\left\vert \gamma-\gamma^{\prime}\right\vert $ is the usual
Euclidian one on $%
\mathbb{R}
^{m}$.) Given an initial mode $\gamma_{0}\in E,$ the first jump time satisfies
$\mathbb{P}^{0,\gamma_{0}}\left(  T_{1}\geq t\right)  =\exp\left(
-t\lambda\left(  \gamma_{0}\right)  \right)  .$ The process $\gamma
_{t}:=\gamma_{0},$ on $t<T$ $_{1}.$ The post-jump location $\gamma^{1}$ has
$Q\left(  \gamma_{0},\cdot\right)  $ as conditional distribution. Next, we
select the inter-jump time $T_{2}-T_{1}$ such that $\mathbb{P}^{0,\gamma_{0}%
}\left(  T_{2}-T_{1}\geq t\text{ }/\text{ }T_{1},\gamma^{1}\right)
=\exp\left(  -t\lambda\left(  \gamma^{1}\right)  \right)  $ and set
$\gamma_{t}:=\gamma^{1},$ if $t\in\left[  T_{1},T_{2}\right)  .$ The post-jump
location $\gamma^{2}$ satisfies $\mathbb{P}^{0,\gamma_{0}}\left(  \gamma
^{2}\in A\text{ }/\text{ }T_{2},T_{1},\gamma^{1}\right)  =Q\left(  \gamma
^{1},A\right)  ,$ for all Borel set $A\subset E.$ And so on. Similar
construction can be given for a non-zero initial starting time (i.e a pair
$\left(  t,\gamma_{0}\right)  $).

We look at the process $\gamma$ under $\mathbb{P}^{0,\gamma_{0}}$ and denote
by $\mathbb{F}^{0}$ the filtration $\left(  \mathcal{F}_{\left[  0,t\right]
}:=\sigma\left\{  \gamma_{r}:r\in\left[  0,t\right]  \right\}  \right)
_{t\geq0}.$ The predictable $\sigma$-algebra will be denoted by $\mathcal{P}%
^{0}$ and the progressive $\sigma$-algebra by $Prog^{0}.$ As usual, we
introduce the random measure $p$ on $\Omega\times\left(  0,\infty\right)
\times E$ by setting%
\[
p\left(  \omega,A\right)  =\sum_{k\geq1}1_{\left(  T_{k}\left(  \omega\right)
,\gamma_{T_{k}\left(  \omega\right)  }\left(  \omega\right)  \right)  \in
A},\text{ for all }\omega\in\Omega,\text{ }A\in\mathcal{B}\left(
0,\infty\right)  \times\mathcal{B}\left(  E\right)  .
\]
The compensator of $p$ is $\lambda\left(  \gamma_{s-}\right)  Q\left(
\gamma_{s-},d\theta\right)  ds$ and the compensated martingale measure is
given by
\[
q\left(  ds,d\theta\right)  :=p\left(  ds,d\theta\right)  -\lambda\left(
\gamma_{s-}\right)  Q\left(  \gamma_{s-},d\theta\right)  ds.
\]
Following the general theory of integration with respect to random measures
(see, for example \cite{Ikeda_Watanabe_1981}), we denote by $\mathcal{L}%
^{r}\left(  p;%
\mathbb{R}
^{M}\right)  $ the space of all $\mathcal{P}^{0}\otimes\mathcal{B}\left(
E\right)  $ - measurable, $%
\mathbb{R}
^{M}-$valued functions $H_{s}\left(  \omega,\theta\right)  $ on $\Omega\times%
\mathbb{R}
_{+}\times E$ such that
\begin{align*}
&  \mathbb{E}^{0,\gamma_{0}}\left[  \int_{0}^{T}\int_{E}\left\vert
H_{s}\left(  \theta\right)  \right\vert ^{r}p\left(  ds,d\theta\right)
\right] \\
&  =\mathbb{E}^{0,\gamma_{0}}\left[  \int_{0}^{T}\int_{E}\left\vert
H_{s}\left(  \theta\right)  \right\vert ^{r}\lambda\left(  \gamma_{s-}\right)
Q\left(  \gamma_{s-},d\theta\right)  ds\right]  <\infty,
\end{align*}
for all $T<\infty.$Here, $M\in%
\mathbb{N}
^{\ast}$ and $r\geq1$ is \ a real parameter. By abuse of notation, whenever no
confusion is at risk, the family of processes satisfying the above condition
for a fixed $T>0$ will still be denoted by $\mathcal{L}^{r}\left(  p;%
\mathbb{R}
^{M}\right)  $.

\subsection{Switch Linear Model}

We consider switch systems given by a process $(X(t),\gamma(t))$ on the state
space $%
\mathbb{R}
^{n}\times E,$ for some $n\geq1$ and a family of modes $E$. $\ $The control
state space is assumed to be some Euclidian space $%
\mathbb{R}
^{d},$ $d\geq1$. The component $X(t)$ follows a differential dynamic depending
on the hidden variable $\gamma$. We will deal with the following model.%
\begin{equation}
\left\{
\begin{array}
[c]{l}%
dX_{s}^{x,u}=\left[  A\left(  \gamma_{s}\right)  X_{s}^{x,u}+B_{s}%
u_{s}\right]  ds+\int_{E}C\left(  \gamma_{s-},\theta\right)  X_{s-}%
^{x,u}q\left(  ds,d\theta\right)  ,\text{ }s\geq0,\\
X_{0}^{x,u}=x.
\end{array}
\right.  \label{SDE0}%
\end{equation}
The operators $A\left(  \gamma\right)  \in%
\mathbb{R}
^{n\times n}$ are such that $\sup_{\gamma\in E}\left\vert A\left(
\gamma\right)  \right\vert \leq a_{0}$, for some positive real constant
$a_{0}\in%
\mathbb{R}
$. The coefficient
\[
B\in\mathbb{L}_{loc}^{\infty}\left(  \Omega\times%
\mathbb{R}
_{+},\mathcal{P}^{0},\mathbb{P}^{0,\gamma_{0}}\otimes Leb;%
\mathbb{R}
^{n\times d}\right)  ,
\]
that is $B_{t}\left(  \omega\right)  \in%
\mathbb{R}
^{n\times d},$ for all $t\geq0$ and all $\omega\in\Omega$, the function $B$ is
$\mathcal{P}^{0}-$ measurable and locally bounded in time (i.e. $\sup
_{t\in\left[  0,T\right]  }\left\vert B_{t}\right\vert _{%
\mathbb{R}
^{n\times d}}<c_{T},$ $\mathbb{P}^{0,\gamma_{0}}-a.s.,$ for some positive real
constants $c_{T}$ and all $T>0$)$.$ Particular evolution will be imposed on
$B$ afterwards. The coefficients $C\left(  \gamma,\theta\right)  \in%
\mathbb{R}
^{n\times n}$, for all $\gamma,\theta\in E$ satisfy
\[
\sup_{\gamma\in E}\int_{E}\left\vert C\left(  \gamma,\theta\right)
\right\vert ^{2}\lambda\left(  \gamma\right)  Q\left(  \gamma,d\theta\right)
<\infty,\text{ for all }T>0.
\]
Moreover, the control process $u:\Omega\times%
\mathbb{R}
_{+}\longrightarrow%
\mathbb{R}
^{d}$ is an $%
\mathbb{R}
^{d}$-valued, $\mathbb{F}^{0}-$ progressively measurable, locally square
integrable process i.e.%
\[
\mathbb{E}^{0,\gamma_{0}}\left[  \int_{0}^{T}\left\vert u_{s}\right\vert
^{2}ds\right]  <\infty.
\]
The space of all such processes will be denoted by $\mathcal{U}_{ad}$ and
referred to as the family of admissible control processes. The explicit
structure of such processes can be found in \cite[Proposition 4.2.1]%
{Jacobsen}, for instance. Since the control process does not (directly)
intervene in the noise term, the solution of the above system can be
explicitly computed with $\mathcal{U}_{ad}$ processes instead of the (more
usual) predictable processes.

The reader acquainted with the literature on PDMP may note that our processes
are very particular. The evolutions concern full spaces and this is why jumps
from the border cannot occur. Between jumps, the couple $\left(
\gamma,x\right)  $ has $\left(  0,\left[  A\left(  \gamma\right)  -\int
C\left(  \gamma,\theta\right)  \lambda\left(  \gamma\right)  Q\left(
\gamma,d\theta\right)  \right]  x+Bu\right)  $ as speed, the jump intensity is
$\lambda\left(  \gamma\right)  $ and the post-jump measure is $\delta
_{x+C\left(  \gamma,\theta\right)  x}\left(  dy\right)  Q\left(
\gamma,d\theta\right)  $.

The reader is kindly asked to recall the financial example we gave in our
introduction. Within this framework, in order for our model to be coherent in
some/any finite time horizon $T>0$, it should be able to provide a "guess"
that is not very far from the actual price at time $T.$ In other words, by
eventually changing the control parameter $u$, we should be able to simulate
any possible (regular) price at time $T$ or, equivalently, any
square-integrable random variable which is measurable w.r.t. $\mathcal{F}%
_{\left[  0,T\right]  }$ (the natural $\sigma$-field generated by the random
measure $p$ prior to time $T$).

\begin{definition}
The system (\ref{SDE0}) is said to be approximately controllable in time $T>0
$ if, for every $\gamma_{0}\in E,$ every $\mathcal{F}_{\left[  0,T\right]  }%
$-measurable, square integrable $\xi\in\mathbb{L}^{2}\left(  \Omega
,\mathcal{F}_{\left[  0,T\right]  },\mathbb{P}^{0,\gamma_{0}};%
\mathbb{R}
^{n}\right)  $, every initial condition $x\in%
\mathbb{R}
^{n}$ and every $\varepsilon>0$, there exists some admissible control process
$u\in\mathcal{U}_{ad}$ such that $\mathbb{E}^{0,\gamma_{0}}\left[  \left\vert
X_{T}^{x,u}-\xi\right\vert ^{2}\right]  \leq\varepsilon.$

The system (\ref{SDE0}) is said to be approximately null-controllable in time
$T>0$ if the previous condition holds for $\xi=0$ ($\mathbb{P}^{0,\gamma_{0}}$-a.s.).
\end{definition}

For deterministic systems (cf. \cite{Hautus}, \cite{russell_Weiss_1994}, etc.)
or Brownian-driven control systems (e.g. \cite{Peng_94},
\cite{Buckdahn_Quincampoix_Tessitore_2006}, \cite{Liu_Peng_2002}),
controllability is known to be associated to observability properties for the
dual (linear) system. In the stochastic framework, one is led to backward
stochastic differential equations. Following this intuition, we consider the
following system.%
\begin{equation}
\left\{
\begin{array}
[c]{l}%
dY_{t}^{T,\xi}=\left[  -A^{\ast}\left(  \gamma_{t}\right)  Y_{t}^{T,\xi}%
-\int_{E}C^{\ast}\left(  \gamma_{t},\theta\right)  Z_{t}^{T,\xi}\left(
\theta\right)  \lambda\left(  \gamma_{t}\right)  Q\left(  \gamma_{t}%
,d\theta\right)  \right]  dt\\
\text{ \ \ \ \ \ \ \ }+\int_{E}Z_{t}^{T,\xi}\left(  \theta\right)  q\left(
dt,d\theta\right)  ,\\
Y_{T}^{T,\xi}=\xi\in\mathbb{L}^{2}\left(  \Omega,\mathcal{F}_{\left[
0,T\right]  },\mathbb{P}^{0,\gamma_{0}};%
\mathbb{R}
^{n}\right)  .
\end{array}
\right.  \label{BSDE0}%
\end{equation}
The solution is a couple
\[
\left(  Y_{\cdot}^{T,\xi},Z_{\cdot}^{T,\xi}\left(  \cdot\right)  \right)
\in\mathbb{L}^{2}\left(  \Omega\times\left[  0,T\right]  ,\text{ }%
Prog^{0},\mathbb{P}^{0,\gamma_{0}}\otimes Leb;%
\mathbb{R}
^{n}\right)  \times\mathcal{L}^{2}\left(  p;%
\mathbb{R}
^{n}\right)
\]
i.e. such that $Y_{\cdot}^{T,\xi}$ is progressively measurable and $Z_{\cdot
}^{T,\xi}$ is $\mathcal{P}^{0}\times\mathcal{B}\left(  E\right)  $-measurable
and
\[
\mathbb{E}^{0,\gamma_{0}}\left[  \int_{0}^{T}\left\vert Y_{s}^{T,\xi
}\right\vert ^{2}ds\right]  +\mathbb{E}^{0,\gamma_{0}}\left[  \int_{0}^{T}%
\int_{E}\left\vert Z_{s}^{T,\xi}\left(  \theta\right)  \right\vert ^{2}%
\lambda\left(  \gamma_{s-}\right)  Q\left(  \gamma_{s-},d\theta\right)
ds\right]  <\infty.
\]
For further details on existence and uniqueness of the solution of BSDE of
this type, the reader is referred to \cite{Confortola_Fuhrman_2014},
\cite{Confortola_Fuhrman_2013} (see also \cite{Xia_00}). (Although the results
in \cite{Confortola_Fuhrman_2014}, are given in the one-dimensional framework
for the purpose of the cited paper, the assertions in \cite[Lemmas 3.2 and
3.3, Theorems 2.2 and 3.4]{Confortola_Fuhrman_2014} hold true with no change
for the multi-dimensional case. Also the assumptions in \cite[Hypothesis
3.1]{Confortola_Fuhrman_2014} are easily verified in our model.)

We get the following duality result which specifies the connection between the
above equation (\ref{BSDE0}) and the controllability of equation (\ref{SDE0}).

\begin{theorem}
\label{TheoremCar}The system (\ref{SDE0}) is approximately controllable in
time $T>0$ if and only if, for every $\gamma_{0}\in E,$ the only solution
$\left(  Y_{t}^{T,\xi},Z_{t}^{T,\xi}\left(  \cdot\right)  \right)  $ of the
dual system (\ref{BSDE0}) for which $Y_{t}^{T,\xi}\in KerB_{t}^{\ast},$
$\mathbb{P}^{0,\gamma_{0}}\mathbb{\otimes}Leb$ almost everywhere on
$\Omega\times\left[  0,T\right]  $ is the trivial (zero) solution.

The necessary and sufficient condition for approximate null-controllability of
(\ref{SDE0}) is that for every $\gamma_{0}\in E,$ any solution $\left(
Y_{t}^{T,\xi},Z_{t}^{T,\xi}\left(  \cdot\right)  \right)  $ of the dual system
(\ref{BSDE0}) for which $Y_{t}^{T,\xi}\in KerB_{t}^{\ast}$ $,$ $\mathbb{P}%
^{0,\gamma_{0}}\mathbb{\otimes}Leb$ almost everywhere on $\Omega\times\left[
0,T\right]  $ should equally satisfy $Y_{0}^{T,\xi}=0.$
\end{theorem}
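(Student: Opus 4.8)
The plan is to establish the duality between the forward controlled SDE (\ref{SDE0}) and the backward system (\ref{BSDE0}) by an It\^o-type product formula, and then to argue via a standard functional-analytic (Hahn--Banach / closed-range) argument. Fix $\gamma_{0}\in E$ and $T>0$. First I would compute, for an admissible control $u\in\mathcal{U}_{ad}$ and a terminal datum $\xi\in\mathbb{L}^{2}(\Omega,\mathcal{F}_{[0,T]},\mathbb{P}^{0,\gamma_{0}};\mathbb{R}^{n})$, the product $\langle X_{t}^{x,u},Y_{t}^{T,\xi}\rangle$. Applying the It\^o formula for processes driven by the random measure $p$ (equivalently, by the compensated measure $q$), the drift terms involving $A(\gamma_{t})$ cancel against those involving $-A^{\ast}(\gamma_{t})$, and the bracket term coming from the two stochastic integrals $\int C X_{s-}q(ds,d\theta)$ and $\int Z_{s}q(ds,d\theta)$ produces exactly the compensator term $\int_{E} \langle C(\gamma_{s-},\theta)X_{s-}, Z_{s}(\theta)\rangle \lambda(\gamma_{s-})Q(\gamma_{s-},d\theta)\,ds$, which is annihilated by the $-\int_{E}C^{\ast}Z\,\lambda Q(d\theta)$ drift in (\ref{BSDE0}). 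Taking expectations (the local martingale parts have zero expectation by the integrability of $X$, $Y$, $Z$ and $C$) yields the fundamental identity
\begin{equation}
\mathbb{E}^{0,\gamma_{0}}\bigl[\langle \xi,X_{T}^{x,u}\rangle\bigr]-\langle Y_{0}^{T,\xi},x\rangle=\mathbb{E}^{0,\gamma_{0}}\Bigl[\int_{0}^{T}\langle u_{s},B_{s}^{\ast}Y_{s}^{T,\xi}\rangle\,ds\Bigr].\label{dualityid}
\end{equation}

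Next I would phrase approximate controllability as a range-density statement. Define the bounded linear map $L_{T}\colon\mathcal{U}_{ad}\to\mathbb{L}^{2}(\Omega,\mathcal{F}_{[0,T]},\mathbb{P}^{0,\gamma_{0}};\mathbb{R}^{n})$ by $L_{T}u=X_{T}^{0,u}$ (the response with zero initial condition); by linearity of (\ref{SDE0}) in $(x,u)$, approximate controllability in time $T$ is equivalent to $\overline{\mathrm{Range}(L_{T})}=\mathbb{L}^{2}(\Omega,\mathcal{F}_{[0,T]},\mathbb{P}^{0,\gamma_{0}};\mathbb{R}^{n})$, i.e. to $\mathrm{Ker}(L_{T}^{\ast})=\{0\}$, while approximate null-controllability is equivalent to the free term $X_{T}^{x,0}$ lying in $\overline{\mathrm{Range}(L_{T})}$ for every $x$, i.e. to $\langle \xi, X_{T}^{x,0}\rangle=0$ for every $\xi\in\mathrm{Ker}(L_{T}^{\ast})$ and every $x$. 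From (\ref{dualityid}) with $x=0$, one reads that $L_{T}^{\ast}\xi$ is precisely the process $(B_{s}^{\ast}Y_{s}^{T,\xi})_{s\in[0,T]}\in\mathbb{L}^{2}(\Omega\times[0,T])$; hence $\xi\in\mathrm{Ker}(L_{T}^{\ast})$ if and only if $B_{s}^{\ast}Y_{s}^{T,\xi}=0$, i.e. $Y_{s}^{T,\xi}\in\mathrm{Ker}B_{s}^{\ast}$, $\mathbb{P}^{0,\gamma_{0}}\otimes Leb$-a.e. This immediately gives the first assertion. For the second, using (\ref{dualityid}) with general $x$ and control $u\equiv0$ together with the identity $X_{T}^{x,u}=X_{T}^{x,0}+X_{T}^{0,u}$, the null-controllability criterion $\langle\xi,X_{T}^{x,0}\rangle=0\ \forall x$ becomes $\langle Y_{0}^{T,\xi},x\rangle=0\ \forall x$, i.e. $Y_{0}^{T,\xi}=0$, for all $\xi$ with $Y^{T,\xi}\in\mathrm{Ker}B^{\ast}$ a.e. — which is the claimed condition.

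The main obstacle I anticipate is twofold. First, justifying the It\^o/product formula and the vanishing of the martingale parts requires care with the integrability classes: one must check that $\langle C(\gamma_{s-},\theta)X_{s-}, Z_{s}(\theta)\rangle$ is $p$-integrable, which follows from the Cauchy--Schwarz inequality combined with the assumed bound $\sup_{\gamma}\int_{E}|C(\gamma,\theta)|^{2}\lambda(\gamma)Q(\gamma,d\theta)<\infty$, the moment estimates on $X^{x,u}$ (from Gr\"onwall, using boundedness of $A$ and of $B$ on $[0,T]$), and $Z^{T,\xi}\in\mathcal{L}^{2}(p;\mathbb{R}^{n})$; this is routine but must be stated. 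Second, and more delicate, is the closed-range / duality step: the identification $\overline{\mathrm{Range}(L_{T})}^{\perp}=\mathrm{Ker}(L_{T}^{\ast})$ is the standard orthogonality relation in Hilbert space, so the only genuine point is the boundedness and well-definedness of $L_{T}$ on $\mathcal{U}_{ad}$ and that $\mathcal{U}_{ad}$ (progressively measurable, locally square-integrable processes) is the right Hilbert space on which to take adjoints — here one invokes that the solution of (\ref{SDE0}) can be computed with $\mathcal{U}_{ad}$ in place of predictable controls, as noted after the definition of $\mathcal{U}_{ad}$, so no completion issue arises. Once these points are in place, the theorem follows directly from (\ref{dualityid}).
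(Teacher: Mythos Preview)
Your proposal is correct and follows essentially the same approach as the paper: define the input-to-output operator $L_{T}u=X_{T}^{0,u}$ (the paper's $\mathcal{R}_{T}$), derive the duality identity via It\^{o}'s formula, identify $L_{T}^{\ast}\xi=(B_{t}^{\ast}Y_{t}^{T,\xi})_{t}$ and the free-response pairing $\langle\xi,X_{T}^{x,0}\rangle=\langle Y_{0}^{T,\xi},x\rangle$, and conclude by the standard Hilbert-space orthogonality $\overline{\mathrm{Range}(L_{T})}^{\perp}=\mathrm{Ker}(L_{T}^{\ast})$. The integrability checks you flag are indeed routine and are simply asserted in the paper.
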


\begin{proof}
Let us fix $T>0$ and $\gamma_{0}\in E$. As usual, one can consider the
(continuous) linear operators
\[
\left.
\begin{array}
[c]{l}%
\mathcal{R}_{T}:\mathcal{U}_{ad}\longrightarrow\mathbb{L}^{2}\left(
\Omega,\mathcal{F}_{\left[  0,T\right]  },\mathbb{P}^{0,\gamma_{0}};%
\mathbb{R}
^{n}\right)  \text{, }\mathcal{R}_{T}\left(  u\right)  :=X_{T}^{0,u},\text{for
all }u\in\mathcal{U}_{ad},\\
\mathcal{I}_{T}:%
\mathbb{R}
^{n}\longrightarrow\mathbb{L}^{2}\left(  \Omega,\mathcal{F}_{\left[
0,T\right]  },\mathbb{P}^{0,\gamma_{0}};%
\mathbb{R}
^{n}\right)  ,\text{ }\mathcal{I}_{T}\left(  x\right)  =X_{T}^{x,0},\text{ for
all }x\mathcal{\in}%
\mathbb{R}
^{n}.
\end{array}
\right.
\]
One notices easily that $X_{T}^{x,u}=\mathcal{R}_{T}\left(  u\right)
+\mathcal{I}_{T}\left(  x\right)  ,$ for all $u\in\mathcal{U}_{ad}$ and all
$x\mathcal{\in}%
\mathbb{R}
^{n}.$ Hence, approximate controllability is equivalent to $\mathcal{R}%
_{T}(\mathcal{U}_{ad})$ being dense in $\mathbb{L}^{2}\left(  \Omega
,\mathcal{F}_{\left[  0,T\right]  },\mathbb{P}^{0,\gamma_{0}};%
\mathbb{R}
^{n}\right)  ,$ while approximate null-controllability is equivalent to
$\mathcal{I}_{T}\left(
\mathbb{R}
^{n}\right)  $ being included in the closure of $\mathcal{R}_{T}%
(\mathcal{U}_{ad})$ (w.r.t. the usual topology on $\mathbb{L}^{2}\left(
\Omega,\mathcal{F}_{\left[  0,T\right]  },\mathbb{P}^{0,\gamma_{0}};%
\mathbb{R}
^{n}\right)  $). Equivalently, this leads to
\[
Ker\left(  \mathcal{R}_{T}^{\ast}\right)  =\left\{  0\right\}  \text{(resp.
}Ker\left(  \mathcal{R}_{T}^{\ast}\right)  \subset Ker\left(  \mathcal{I}%
_{T}^{\ast}\right)  \text{).}%
\]
One easily gets, using It\^{o}'s formula (the reader may consult \cite[Chapter
II, Section 5, Theorem 5.1]{Ikeda_Watanabe_1981}, for example),%
\[
\mathbb{E}^{0,\gamma_{0}}\left[  \left\langle X_{T}^{x,u},Y_{T}^{T,\xi
}\right\rangle \right]  =\left\langle x,Y_{0}^{T,\xi}\right\rangle
+\mathbb{E}^{0,\gamma_{0}}\left[  \int_{0}^{T}\left\langle B_{t}u_{t}%
,Y_{t}^{T,\xi}\right\rangle dt\right]  ,
\]
for all $T>0,$ $x\in%
\mathbb{R}
^{n},$ $u\in\mathcal{U}_{ad}$ and all $\xi\in\mathbb{L}^{2}\left(
\Omega,\mathcal{F}_{\left[  0,T\right]  },\mathbb{P}^{0,\gamma_{0}};%
\mathbb{R}
^{n}\right)  .$ For $x=0,$ one identifies
\[
\mathcal{R}_{T}^{\ast}\left(  \xi\right)  =\left(  B_{t}^{\ast}Y_{t}^{T,\xi
}\right)  _{0\leq t\leq T}.
\]
For $u=0,$ one gets%
\[
\mathcal{I}_{T}^{\ast}\left(  \xi\right)  =Y_{0}^{T,\xi}.
\]
The assertion of our theorem follows.
\end{proof}

Equivalent assertions are easily obtained by interpreting the system
(\ref{BSDE0}) as a controlled, forward one :%
\begin{equation}
\left\{
\begin{array}
[c]{l}%
dY_{t}^{y,v}=\left(  -A^{\ast}\left(  \gamma_{t}\right)  Y_{t}^{y,v}-\int%
_{E}C^{\ast}\left(  \gamma_{t},\theta\right)  v_{t}\left(  \theta\right)
\lambda\left(  \gamma_{t}\right)  Q\left(  \gamma_{t},d\theta\right)  \right)
dt\\
\text{ \ \ \ \ \ \ \ }+\int_{E}v_{t}\left(  \theta\right)  q\left(
dt,d\theta\right)  ,\\
Y_{0}^{y,v}=y\in%
\mathbb{R}
^{n}.
\end{array}
\right.  \label{SDE'}%
\end{equation}
We emphasize that in this framework, the family of admissible control
processes is given by $v\in\mathcal{L}^{2}\left(  p;%
\mathbb{R}
^{n}\right)  $. This remark will be of particular use for sufficiency criteria.

\begin{theorem}
\label{ThEquivCtrl}The system (\ref{SDE0}) is approximately controllable in
time $T>0$ if and only if, for every initial data $\gamma_{0}\in E,$ $y\in%
\mathbb{R}
^{n}$ and every predictable control process $v\in\mathcal{L}^{2}\left(  p;%
\mathbb{R}
^{n}\right)  $ such that $Y_{t}^{y,v}\in KerB_{t}^{\ast},$ $\mathbb{P}%
^{0,\gamma_{0}}\mathbb{\otimes}Leb$ almost everywhere on $\Omega\times\left[
0,T\right]  ,$ it must hold $Y_{t}^{y,v}=0,$ $\mathbb{P}^{0,\gamma_{0}%
}\mathbb{\otimes}Leb$-almost everywhere on $\Omega\times\left[  0,T\right]  .$

The system (\ref{SDE0}) is approximately null-controllable in time $T>0$ if
and only if, for every initial data $\gamma_{0}\in E,$ $y\in%
\mathbb{R}
^{n}$ and every predictable control process $v\in\mathcal{L}^{2}\left(  p;%
\mathbb{R}
^{n}\right)  $ such that $Y_{t}^{y,v}\in KerB_{t}^{\ast},$ $\mathbb{P}%
^{0,\gamma_{0}}\mathbb{\otimes}Leb$ almost everywhere on $\Omega\times\left[
0,T\right]  ,$ it must hold $y=0$.
\end{theorem}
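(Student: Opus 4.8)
The plan is to derive Theorem \ref{ThEquivCtrl} from the already-established Theorem \ref{TheoremCar} by setting up a correspondence between solutions of the backward system (\ref{BSDE0}) and solutions of the forward system (\ref{SDE'}). The key observation is that both systems are \emph{linear} SDEs driven by the same compensated martingale measure $q$, and that (\ref{BSDE0}) read with terminal datum $\xi$ is the time-reversed analogue of (\ref{SDE'}) read with initial datum $y$; in fact they describe the same equation, only differing in whether one specifies the value at time $0$ or at time $T$. So the heart of the matter is the following dictionary: given a solution $(Y^{T,\xi},Z^{T,\xi})$ of (\ref{BSDE0}), put $y:=Y_0^{T,\xi}$ and $v_t(\theta):=Z_t^{T,\xi}(\theta)$; then $(Y^{T,\xi},Z^{T,\xi})$ also solves (\ref{SDE'}) with this initial datum $y$ and control $v$, by comparing the two dynamics termwise. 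Conversely, given a solution $Y^{y,v}$ of (\ref{SDE'}) with control $v\in\mathcal{L}^2(p;\mathbb{R}^n)$, set $\xi:=Y_T^{y,v}$ (which lies in $\mathbb{L}^2(\Omega,\mathcal{F}_{[0,T]},\mathbb{P}^{0,\gamma_0};\mathbb{R}^n)$ because $Y^{y,v}$ has continuous-in-time $\mathbb{L}^2$ paths and the integrability of $v$ guarantees the needed moment bound) and $Z_t^{T,\xi}(\theta):=v_t(\theta)$; then $(Y^{y,v},v)$ solves (\ref{BSDE0}) with this terminal datum, again by matching the drift and jump terms.

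With this bijection in hand the argument is a formal transcription. For approximate controllability: Theorem \ref{TheoremCar} says the system is approximately controllable iff every $(Y^{T,\xi},Z^{T,\xi})$ with $Y_t^{T,\xi}\in\operatorname{Ker}B_t^\ast$ a.e.\ vanishes identically on $\Omega\times[0,T]$. Under the dictionary, such pairs are exactly the trajectories $Y^{y,v}$ of (\ref{SDE'}) with $Y_t^{y,v}\in\operatorname{Ker}B_t^\ast$ a.e., and the conclusion ``$(Y^{T,\xi},Z^{T,\xi})$ is the zero solution'' translates into ``$Y_t^{y,v}=0$ a.e.\ on $\Omega\times[0,T]$''. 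That is precisely the first statement of Theorem \ref{ThEquivCtrl}. For approximate null-controllability: Theorem \ref{TheoremCar} gives that the condition ``$Y_t^{T,\xi}\in\operatorname{Ker}B_t^\ast$ a.e.'' forces ``$Y_0^{T,\xi}=0$''; under the correspondence $Y_0^{T,\xi}=y$, so this reads ``$Y_t^{y,v}\in\operatorname{Ker}B_t^\ast$ a.e.\ implies $y=0$'', which is the second statement. One also needs to note that the quantification over $\xi\in\mathbb{L}^2(\Omega,\mathcal{F}_{[0,T]};\mathbb{R}^n)$ in Theorem \ref{TheoremCar} matches the quantification over $(y,v)$ here: every admissible $v$ and every $y\in\mathbb{R}^n$ produces a terminal value $\xi=Y_T^{y,v}$, and every $\xi$ arises this way (take $v=Z^{T,\xi}$, $y=Y_0^{T,\xi}$), so the two families of conditions are logically identical.

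The one genuinely technical point — the place where a little care is needed rather than pure bookkeeping — is the equivalence at the level of \emph{function spaces}: one must check that the pair $(Z^{T,\xi})$ appearing as the martingale integrand in (\ref{BSDE0}), which a priori lives in $\mathcal{L}^2(p;\mathbb{R}^n)$ with $\mathcal{P}^0\otimes\mathcal{B}(E)$-measurability, is exactly the class of admissible controls $v$ for (\ref{SDE'}), and conversely that plugging an admissible $v$ into (\ref{SDE'}) yields a $Y^{y,v}$ with the progressive measurability and square-integrability required to be a valid $Y$-component of a BSDE solution. This follows from the existence-uniqueness theory cited after (\ref{BSDE0}) (namely \cite{Confortola_Fuhrman_2014,Confortola_Fuhrman_2013}): the forward equation (\ref{SDE'}) has, for each $y$ and each $v\in\mathcal{L}^2(p;\mathbb{R}^n)$, a unique solution in the stated space, and by uniqueness of the backward solution with terminal value $Y_T^{y,v}$ the two constructions are inverse to each other up to indistinguishability. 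Once this identification of solution spaces is granted, Theorem \ref{ThEquivCtrl} is immediate from Theorem \ref{TheoremCar}, and indeed this is why we phrased it as a ``remark''-level consequence rather than proving it from scratch.
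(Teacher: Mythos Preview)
Your proposal is correct and follows exactly the route the paper takes: the paper's own justification (given in the Remark immediately after Theorem~\ref{ThEquivCtrl}) is precisely the bijection you describe, identifying $(Y^{y,v},v)$ with $(Y^{T,\xi},Z^{T,\xi})$ via $\xi=Y_T^{y,v}$ in one direction and $y=Y_0^{T,\xi}$, $v=Z^{T,\xi}$ in the other, and concluding that Theorem~\ref{ThEquivCtrl} is just a reformulation of Theorem~\ref{TheoremCar}. Your write-up is in fact more careful than the paper's about the function-space compatibility and the role of uniqueness, but the argument is the same.
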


\begin{remark}
The reader should note that for every initial data $y\in%
\mathbb{R}
^{n}$ and $v\in\mathcal{L}^{2}\left(  p;%
\mathbb{R}
^{n}\right)  $, one identifies the couple $\left(  Y_{\cdot}^{y,v},v_{\cdot
}\left(  \cdot\right)  \right)  $ with the solution $\left(  Y_{\cdot}^{T,\xi
},Z_{\cdot}^{T,\xi}\left(  \cdot\right)  \right)  $ for $\xi=Y_{T}^{y,v}.$
Conversly, for every $\xi\in\mathbb{L}^{2}\left(  \Omega,\mathcal{F}_{\left[
0,T\right]  },\mathbb{P}^{0,\gamma_{0}};%
\mathbb{R}
^{n}\right)  ,$ the solution of (\ref{BSDE0}) is given by the couple $\left(
Y_{\cdot}^{y,v},v_{\cdot}\left(  \cdot\right)  \right)  $ by picking
$y=Y_{0}^{T,\xi}$ and $v=Z^{T,\xi}.$ Thus, the previous Theorem is only a
reformulation of Theorem \ref{TheoremCar}.
\end{remark}

\section{Explicit Invariance-Type Conditions\label{SectionExplicitInvariance}}

As one can easily see, the approximate (null-) controllability conditions
emphasized in the previous result involve $Ker\left(  B_{t}^{\ast}\right)  .$
In order to obtain the (orthogonal) projection onto this space, one uses the
Moore-Penrose pseudoinverse of $B_{t}$%
\[
B_{t}^{+}:=\lim_{\delta\rightarrow0+}\left(  B_{t}^{\ast}B_{t}+\delta
I\right)  ^{-1}B_{t}^{\ast}%
\]
(given here by means of Tykhonov regularizations). However, even if $B_{t}$ is
continuous, $B_{t}^{+}$ can only have continuity properties if these matrices
have the same rank. In this framework, the results on derivation of $B^{+}%
$given in \cite{Golub_Pereyra} apply. To simplify the presentation (by
avoiding fastidious rank conditions), we assume, throughout the remaining of
the paper (and unless stated otherwise), $B$ to satisfy a linear system%
\[
\left\{
\begin{array}
[c]{c}%
dB_{t}=\beta\gamma_{t}B_{t}dt\\
B_{0}=B^{0}\left(  \gamma_{0}\right)  .
\end{array}
\right.  ,
\]
where $\beta\in%
\mathbb{R}
^{1\times d}$ and the space $E$ to be compact$.$ Then, one computes explicitly
the predictable process $B$
\[
B_{t}=e^{\int_{0}^{t}\beta\gamma_{s}ds}B^{0}\left(  \gamma_{0}\right)  ,\text{
}B_{t}^{+}=e^{-\int_{0}^{t}\beta\gamma_{s}ds}\left(  B^{0}\left(  \gamma
_{0}\right)  \right)  ^{+},\text{ for all }t\geq0.
\]
It is clear that $Rank\left(  B_{t}\right)  =Rank\left(  B^{0}\left(
\gamma_{0}\right)  \right)  ,$ for all $t\geq0$. Moreover, $B_{t}B_{t}^{+}%
=\Pi_{\left(  Ker\left(  B_{t}^{\ast}\right)  \right)  ^{\bot}}$ is the
orthogonal projector onto the orthogonal complement of the kernel of
$B_{t}^{\ast}$. In particular,
\[
\Pi_{Ker\left(  B_{t}^{\ast}\right)  }=I-B^{0}\left(  \gamma_{0}\right)
\left(  B^{0}\left(  \gamma_{0}\right)  \right)  ^{+}=\Pi_{Ker\left[  \left(
B^{0}\left(  \gamma_{0}\right)  \right)  ^{\ast}\right]  }.
\]

We recall the following notions of invariance (see \cite{Curtain_86},
\cite{Schmidt_Stern_80}).

\begin{definition}
We consider two linear operators $\mathcal{A\in}%
\mathbb{R}
^{n\times n}$ and $\mathcal{C}$ defined on some vector space $\mathcal{X}$ and
taking its values in $%
\mathbb{R}
^{n}$.

(a) A set $V\subset%
\mathbb{R}
^{n}$ is said to be $\mathcal{A}$- invariant if $\mathcal{A}V\subset V.$

(b) A set $V\subset%
\mathbb{R}
^{n}$ is said to be $\left(  \mathcal{A};\mathcal{C}\right)  $- invariant if
$\mathcal{A}V\subset V+\operatorname{Im}\mathcal{C}.$

(c) A set $V\subset%
\mathbb{R}
^{n}$ is said to be feedback $\left(  \mathcal{A};\mathcal{C}\right)  $-
invariant if there exists some linear operator $\mathcal{F}:%
\mathbb{R}
^{n}\longrightarrow\mathcal{X}$ such that $\left(  \mathcal{A}+\mathcal{CF}%
\right)  V\subset V$ (i.e. $V$ is $\mathcal{A}+\mathcal{CF}$- invariant).
\end{definition}

If the space $\mathcal{X}$ is $%
\mathbb{R}
^{n}$ or some functional space with values in $%
\mathbb{R}
^{n}$ (of $\mathbb{L}^{p}$-type, for example), one can also define a notion of
$\left(  \mathcal{A};\mathcal{C}\right)  $- strict invariance by imposing that
$\mathcal{A}V\subset V+\operatorname{Im}\left(  \mathcal{C}\Pi_{V}\right)  .$
Of course, any strictly invariant space is also invariant.

We will now concentrate on some explicit algebraic conditions which are either
necessary or sufficient for the approximate (null-)controllability. These
criteria apply to general systems (without specific conditions on the
coefficients). In the next section, we will discuss particular cases when
these conditions are necessary and sufficient. We equally present several
examples showing to which extent our conditions are necessary and sufficient.

\subsection{Necessary Invariance Conditions\label{SubsectionNecessary}}

\subsubsection{First Necessary Condition}

We begin with some necessary invariance conditions in order to have
controllability for the system (\ref{SDE0}). Without any further assumptions
on $E$ and $Q,$ we have the following.

\begin{proposition}
\label{PropNec1}If the system (\ref{SDE0}) is approximately null-controllable,
then, for every $\gamma_{0}\in E,$ the largest subspace of $Ker\left[  \left(
B^{0}\left(  \gamma_{0}\right)  \right)  ^{\ast}\right]  $ which is $A^{\ast
}\left(  \gamma_{0}\right)  -\lambda\left(  \gamma_{0}\right)  \int_{E}%
C^{\ast}\left(  \gamma_{0},\theta\right)  Q\left(  \gamma_{0},d\theta\right)
$ - invariant is reduced to the trivial subspace $\left\{  0\right\}  .$
\end{proposition}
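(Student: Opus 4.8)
The plan is to use the characterization of approximate null-controllability from Theorem \ref{TheoremCar} (or its reformulation in Theorem \ref{ThEquivCtrl}) and to construct, from any nonzero vector in the indicated invariant subspace, a nontrivial solution of the dual system (\ref{BSDE0}) that stays inside $\mathrm{Ker}\,B_t^{\ast}$ on a small time interval yet has $Y_0 \neq 0$, contradicting null-controllability. So suppose, for contradiction, that there is a $\gamma_0 \in E$ and a nonzero subspace $V \subset \mathrm{Ker}\big[(B^0(\gamma_0))^{\ast}\big]$ which is invariant under the operator $\mathcal{A}(\gamma_0) := A^{\ast}(\gamma_0) - \lambda(\gamma_0)\int_E C^{\ast}(\gamma_0,\theta)Q(\gamma_0,d\theta)$. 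Pick $0 \neq y_0 \in V$.

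The key idea is that, started from $\gamma_0$, the mode process $\gamma$ is constant and equal to $\gamma_0$ on the stochastic interval $[0,T_1)$ before the first jump, and $\mathbb{P}^{0,\gamma_0}(T_1 \geq t) = e^{-t\lambda(\gamma_0)} > 0$ for every $t$. On the event $\{T_1 \geq t\}$ one has $\gamma_s = \gamma_0$ for $s \leq t$, so $B_s = e^{\int_0^s \beta\gamma_0\,dr}B^0(\gamma_0)$ and hence $\mathrm{Ker}\,B_s^{\ast} = \mathrm{Ker}\big[(B^0(\gamma_0))^{\ast}\big]$ is constant in $s$. I would then exhibit the deterministic curve $\widetilde{Y}_s := e^{-s\,\mathcal{A}(\gamma_0)}y_0$ together with $\widetilde{Z} \equiv 0$, and check that on $[0,T_1)$ this pair solves the dual dynamics (\ref{BSDE0}): indeed with $\widetilde Z \equiv 0$ the driver reduces to $-A^{\ast}(\gamma_0)\widetilde Y_s$, but we need the equation (\ref{BSDE0}) to also carry the $\int_E C^{\ast} Z\,\lambda Q$ term, which vanishes, and the martingale term $\int_E \widetilde Z\,q$, which also vanishes on $[0,T_1)$ since there is no jump and $\widetilde Z \equiv 0$. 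One must be slightly careful: the correct deterministic generator for the pre-jump evolution is not $A^{\ast}(\gamma_0)$ alone but precisely $\mathcal{A}(\gamma_0)$ once one rewrites the compensator contribution — this is why the operator in the statement has the $-\lambda\int C^{\ast}Q$ correction. By $\mathcal{A}(\gamma_0)$-invariance of $V$, $\widetilde Y_s = e^{-s\mathcal{A}(\gamma_0)}y_0 \in V \subset \mathrm{Ker}\,B_s^{\ast}$ for all $s$, so $B_s^{\ast}\widetilde Y_s = 0$ there. Extending $(\widetilde Y,\widetilde Z)$ beyond $T_1$ by solving (\ref{BSDE0}) backward from any terminal condition (e.g. letting it evolve freely, or simply using the solution $(Y^{y_0,0},0)$ of the forward formulation (\ref{SDE'}) with $v \equiv 0$), one gets a genuine global solution of the BSDE with $Y_0 = y_0 \neq 0$.

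It remains to verify that this solution lies in $\mathrm{Ker}\,B_t^{\ast}$ on all of $\Omega \times [0,T]$ up to a $\mathbb{P}^{0,\gamma_0}\otimes Leb$-null set, as required by the criterion. This is the point that needs the most care: the construction only controls the solution on $[0,T_1)$, whereas after the first jump $\gamma$ may leave $\gamma_0$ and $Y_t$ need not stay in the kernel. The clean way around this is to work with the forward formulation of Theorem \ref{ThEquivCtrl}: choose instead the control $v \equiv 0$ and the starting point $y = y_0$; this is admissible, $Y^{y_0,0}$ is the unique solution, and on $\{T_1 \geq t\}$ it coincides with the deterministic curve above, hence lies in $\mathrm{Ker}\,B_t^{\ast}$ there. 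To force membership in the kernel for \emph{all} $t \in [0,T]$ one restricts attention to the positive-probability event $\{T_1 \geq T\}$ — but the criterion of Theorem \ref{ThEquivCtrl} requires the kernel condition $\mathbb{P}^{0,\gamma_0}\otimes Leb$-a.e. on the whole product space, not on a sub-event, so one cannot directly conclude. The resolution, which I expect to be the technical heart of the argument, is a localization: since $T>0$ is fixed but the invariance holds for \emph{every} $\gamma_0$, one may assume (after passing to a post-jump mode if necessary) the relevant structure persists, or — more robustly — one invokes the abstract criterion of Theorem \ref{TheoremCar} with a terminal $\xi$ supported on $\{T_1 \geq T\}$ and argues that null-controllability would force $Y_0 = 0$ on that event, again contradicting $y_0 \neq 0$ by choosing $\xi = \widetilde Y_T \mathbf{1}_{\{T_1 \geq T\}}$ and checking $Y_0^{T,\xi} = y_0 \mathbb{P}^{0,\gamma_0}(T_1 \geq T) \neq 0$ while $B_t^{\ast}Y_t^{T,\xi} = 0$ wherever $Y_t^{T,\xi} \neq 0$. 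I would organize the final write-up around this last construction, as it sidesteps the need to control the post-jump behaviour entirely.
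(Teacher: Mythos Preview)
Your main construction has a genuine error. With $\widetilde Z\equiv 0$, the driver of (\ref{BSDE0}) reduces, as you yourself note, to $-A^{\ast}(\gamma_0)\widetilde Y_s$; both the $\int_E C^{\ast}Z\,\lambda Q$ term and the martingale term $\int_E Z\,q$ vanish identically, so there is no ``compensator contribution to rewrite.'' Consequently the pre-jump evolution of $(Y^{y_0,0},0)$ is governed by $-A^{\ast}(\gamma_0)$, \emph{not} by $-\mathcal A(\gamma_0)$, and the curve $e^{-s\mathcal A(\gamma_0)}y_0$ paired with $\widetilde Z\equiv 0$ is simply not a solution of (\ref{BSDE0}). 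The extra $-\lambda(\gamma_0)\int_E C^{\ast}(\gamma_0,\theta)Q(\gamma_0,d\theta)$ in the statement does \emph{not} come from the dual equation with zero $Z$; it appears on the \emph{primal} side, because in (\ref{SDE0}) the noise coefficient is $C(\gamma_{s-},\theta)X_{s-}$ and its compensator contributes $-\lambda(\gamma_0)\int_E C(\gamma_0,\theta)Q(\gamma_0,d\theta)X_s$ to the pre-jump drift.

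Your fallback with $\xi=\widetilde Y_T\mathbf 1_{\{T_1\ge T\}}$ can in fact be pushed through, but not as written: the assertion ``$B_t^{\ast}Y_t^{T,\xi}=0$ wherever $Y_t^{T,\xi}\ne 0$'' is neither a proof nor the right statement. What one has to do is compute $(Y^{T,\xi},Z^{T,\xi})$ explicitly: on $\{T_1\le t\}$ the terminal datum is $0$ and linearity forces $Y_t^{T,\xi}=0$; matching across the jump gives $Z_t^{T,\xi}(\theta)=-Y_{t-}^{T,\xi}$ on $\{T_1> t\}$, and substituting this into the pre-jump dynamics yields $Y_t^{T,\xi}=e^{-(T-t)\lambda(\gamma_0)}e^{-t\mathcal A(\gamma_0)}y_0$, which indeed lies in $V\subset\mathrm{Ker}(B_t^{\ast})$. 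Only after this computation is the criterion of Theorem~\ref{TheoremCar} violated. None of this is in your write-up.

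By contrast, the paper avoids the dual altogether and argues directly on (\ref{SDE0}). Fixing $\gamma_0$ and restricting to the event $\{T_1>T\}$, which has probability at least $e^{-Tc_0}>0$, the controlled trajectory coincides with the solution $\Phi_{\gamma_0}(\cdot;x,u)$ of the deterministic linear system with matrix $A(\gamma_0)-\lambda(\gamma_0)\int_E C(\gamma_0,\theta)Q(\gamma_0,d\theta)$ and control matrix $B^0(\gamma_0)$. Approximate null-controllability of (\ref{SDE0}) then forces $|\Phi_{\gamma_0}(T;x,u)|^2\le \varepsilon e^{Tc_0}$ for suitable $u$, hence this finite-dimensional deterministic system is controllable, and Kalman's criterion gives the conclusion in one line. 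This is both shorter and conceptually cleaner than the dual route, and it explains transparently why the operator $\mathcal A(\gamma_0)$ (rather than $A^{\ast}(\gamma_0)$) is the right one.
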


\begin{proof}
Let us assume that our system is approximately null-controllable at time $T>0
$ and let us fix $x\in%
\mathbb{R}
^{n}$ and $\gamma_{0}\in E.$ Since $\gamma_{0}$ is fixed, we drop the
dependence of $\gamma_{0}$ in $\mathbb{P}^{0,\gamma_{0}}$. Then, for every
$\varepsilon>0,$ there exists some admissible control process $u\in
\mathcal{U}_{ad}$ such that $\mathbb{E}\left[  \left\vert X_{T}^{x,u}%
\right\vert ^{2}\right]  \leq\varepsilon.$ Since $u$ is assumed to be
progressively measurable, it can be identified prior to jump time $T_{1},$
with some deterministic borelian control still denoted by $u$ (see, for
example, \cite[Proposition 4.2.1]{Jacobsen})$.$ One notes easily that
\begin{align*}
\varepsilon &  \geq\mathbb{E}\left[  \left\vert X_{T}^{x,u}\right\vert
^{2}\right]  \geq\mathbb{E}\left[  \left\vert X_{T}^{x,u}\right\vert
^{2}1_{T_{1}>T}\right]  =\left\vert \Phi_{\gamma_{0}}\left(  T;x,u\right)
\right\vert ^{2}\mathbb{P}\left(  T_{1}>T\right) \\
&  \geq\left\vert \Phi_{\gamma_{0}}\left(  T;x,u\right)  \right\vert
^{2}e^{-Tc_{0}},
\end{align*}
where $c_{0}$ is an upper bound for the transition intensities $\lambda$ and
$\Phi_{\gamma_{0}}$ satisfies the deterministic equation%
\[
\left\{
\begin{array}
[c]{l}%
d\Phi_{\gamma_{0}}\left(  s;x,u\right)  =\left[
\begin{array}
[c]{c}%
\left(  A\left(  \gamma_{0}\right)  -\lambda\left(  \gamma_{0}\right)
\int_{E}C\left(  \gamma_{0},\theta\right)  Q\left(  \gamma_{0},d\theta\right)
\right)  \Phi_{\gamma_{0}}\left(  s;x,u\right) \\
+B^{0}\left(  \gamma_{0}\right)  \left(  e^{\beta\gamma_{0}s}u_{s}\right)
\end{array}
\right]  ds,\\
\Phi_{\gamma_{0}}\left(  0;x,u\right)  =x.
\end{array}
\right.
\]
Then this linear system with mode $\gamma_{0}$ is controllable (keep in mind
that since the coefficients are time-invariant and we have finite-dimensional
state space, the different notions of controllability coincide). Kalman's
criterion of controllability yields%
\[
Rank\left[  \mathcal{B}\text{, }\mathcal{AB},\text{ }...\text{, }%
\mathcal{A}^{n-1}\mathcal{B}\right]  =n,
\]
where $\mathcal{A=}A\left(  \gamma_{0}\right)  -\lambda\left(  \gamma
_{0}\right)  \int_{E}C\left(  \gamma_{0},\theta\right)  Q\left(  \gamma
_{0},d\theta\right)  $ and $\mathcal{B=}B^{0}\left(  \gamma_{0}\right)  $ and
this implies our conclusion.
\end{proof}

In the next section, we shall exhibit various frameworks (continuous switching
systems, corresponding to the case $C=0$) in which this condition is equally
sufficient for the approximate null-controllability as well as counterexamples
for the general setting.

\subsubsection{Second Necessary Condition}

Whenever $\gamma\in E,$ the operator $\mathcal{C}^{\ast}\left(  \gamma\right)
\in\mathcal{L}\left(  \mathbb{L}^{2}\left(  E,\mathcal{B}\left(  E\right)
,\lambda\left(  \gamma\right)  Q\left(  \gamma,\cdot\right)  ;%
\mathbb{R}
^{n}\right)  ;%
\mathbb{R}
^{n}\right)  $ (the space of linear operators from $\mathbb{L}^{2}\left(
E,\mathcal{B}\left(  E\right)  ,\lambda\left(  \gamma\right)  Q\left(
\gamma,\cdot\right)  ;%
\mathbb{R}
^{n}\right)  $ to $%
\mathbb{R}
^{n})$ is defined by%
\[
\mathcal{C}^{\ast}\left(  \gamma\right)  \phi=\int_{E}C^{\ast}\left(
\gamma,\theta\right)  \phi\left(  \theta\right)  \lambda\left(  \gamma\right)
Q\left(  \gamma,d\theta\right)  ,
\]
for all $\phi\in\mathbb{L}^{2}\left(  E,\mathcal{B}\left(  E\right)
,\lambda\left(  \gamma\right)  Q\left(  \gamma,\cdot\right)  ;%
\mathbb{R}
^{n}\right)  .$

Let us now assume the set of modes $E$ to be finite. This assumption is made
in order to simplify the arguments on the local square integrability of
feedback controls in the proof of Proposition \ref{PropNec2}. We define
\[
Acc\left(  k,\gamma_{0}\right)  =\left\{  \gamma\in E:\exists0\leq k^{\prime
}\leq k\text{ s.t. }Q^{k^{\prime}}\left(  \gamma_{0};\gamma^{\prime}\right)
>0\right\}  ,
\]
the family of accessible modes in at most $k\geq0$ iterations starting from
$\gamma_{0}.$ (In the infinite setting, accessible states can alternatively be
defined using the transition kernel). We use the obvious convention
$Acc\left(  0,\gamma_{0}\right)  =\left\{  \gamma_{0}\right\}  $. We set
$V_{k}$ to be the largest subspace $V\subset Ker\left[  \left(  B^{0}\left(
\gamma_{0}\right)  \right)  ^{\ast}\right]  $ such that
\[
A^{\ast}\left(  \gamma\right)  V\subset V+\operatorname{Im}\left(
\mathcal{C}^{\ast}\left(  \gamma\right)  \Pi_{V}\right)  =V+\mathcal{C}^{\ast
}\left(  \gamma\right)  \left(  \mathbb{L}^{2}\left(  E,\mathcal{B}\left(
E\right)  ,\lambda\left(  \gamma\right)  Q\left(  \gamma,\cdot\right)
;V\right)  \right)  ,
\]
for all $\gamma\in Acc\left(  k,\gamma_{0}\right)  $ (i.e. the largest space
$\left(  A^{\ast}\left(  \gamma\right)  ;\mathcal{C}^{\ast}\left(
\gamma\right)  \right)  $-strictly invariant for all $\gamma\in Acc\left(
k,\gamma_{0}\right)  $). All these subspaces depend on the initial mode
$\gamma_{0}.$ Whenever no confusion is at risk, this dependency is dropped.

\begin{remark}
(i) The reader is invited to note that $V_{0}$ is the largest subspace
$V\subset Ker\left[  \left(  B^{0}\left(  \gamma_{0}\right)  \right)  ^{\ast
}\right]  $ which is $\left(  A^{\ast}\left(  \gamma_{0}\right)
;\mathcal{C}^{\ast}\left(  \gamma_{0}\right)  \Pi_{V}\right)  $-invariant.

(ii) If the family of modes $E$ is finite, then $V_{k}=V_{card\left(
E\right)  },$ for all $k\geq card\left(  E\right)  .$
\end{remark}

The following condition is equally necessary in order to have controllability.

\begin{proposition}
\label{PropNec2}If the system (\ref{SDE0}) \ is approximately
null-controllable, then the subspace $V_{\infty}:=\underset{k\geq0}{\cap}%
V_{k}$ is reduced to $\left\{  0\right\}  .$
\end{proposition}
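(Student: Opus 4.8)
The plan is to argue by contradiction, combining the duality characterization of Theorem \ref{TheoremCar} (in its forward formulation, Theorem \ref{ThEquivCtrl}) with a carefully constructed feedback control process $v$ that keeps the dual state $Y$ inside $V_\infty$ for all time. Suppose $V_\infty \neq \{0\}$ and pick $0 \neq y \in V_\infty$. The goal is to exhibit an admissible predictable $v \in \mathcal{L}^2(p;\mathbb{R}^n)$ such that the corresponding solution $Y^{y,v}$ of \eqref{SDE'} satisfies $Y_t^{y,v} \in V_\infty \subset Ker[(B^0(\gamma_0))^\ast] = KerB_t^\ast$ for all $t$, $\mathbb{P}^{0,\gamma_0}\otimes Leb$-a.e.; by Theorem \ref{ThEquivCtrl} this forces $y=0$, a contradiction.

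\textbf{Construction of the stabilizing feedback.} The key observation is that $V_\infty = V_{card(E)}$ is, simultaneously for \emph{every} accessible mode $\gamma$, $(A^\ast(\gamma);\mathcal{C}^\ast(\gamma))$-strictly invariant: $A^\ast(\gamma)V_\infty \subset V_\infty + \mathcal{C}^\ast(\gamma)\big(\mathbb{L}^2(E,\mathcal{B}(E),\lambda(\gamma)Q(\gamma,\cdot);V_\infty)\big)$. Hence for each pair $(\gamma, w)$ with $\gamma$ accessible and $w \in V_\infty$ there exists $\phi_{\gamma,w} \in \mathbb{L}^2(E,\mathcal{B}(E),\lambda(\gamma)Q(\gamma,\cdot);V_\infty)$ and $r_{\gamma,w}\in V_\infty$ with $A^\ast(\gamma)w = r_{\gamma,w} + \int_E C^\ast(\gamma,\theta)\phi_{\gamma,w}(\theta)\lambda(\gamma)Q(\gamma,d\theta)$. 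One can choose these data linearly and measurably in $(\gamma,w)$ — e.g. by a pseudoinverse-type selection on the finite set of accessible modes — so that $\phi$ depends linearly on $w$ via a bounded linear map. I then set $v_t(\theta) := \phi_{\gamma_{t-},\, Y^{y,v}_{t-}}(\theta)$ (and, at jump times, one must also choose $Z$ consistently so that the jump of $Y$ stays in $V_\infty$: the natural choice is that the post-jump value $Y_{t-} + v_t(\theta)$ lies in $V_\infty$, which is exactly guaranteed since $v_t(\theta)\in V_\infty$ and $Y_{t-}\in V_\infty$). Plugging this feedback into the drift of \eqref{SDE'}, the drift becomes $-A^\ast(\gamma_t)Y_t - \int_E C^\ast(\gamma_t,\theta)v_t(\theta)\lambda(\gamma_t)Q(\gamma_t,d\theta) = -r_{\gamma_t,Y_t}$, which lies in $V_\infty$. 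Thus $V_\infty$ is invariant under the closed-loop dynamics; since $y\in V_\infty$ and both drift and jumps preserve $V_\infty$, we get $Y_t^{y,v}\in V_\infty$ for all $t\geq 0$.

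\textbf{Admissibility and well-posedness.} I must check that this closed-loop system has a (unique) solution with $v\in\mathcal{L}^2(p;\mathbb{R}^n)$. Here the finiteness of $E$ enters decisively: on the finitely many accessible modes, $\lambda$ is bounded above and below away from $0$ on its support, and the linear selection $w\mapsto\phi_{\gamma,w}$ has a uniform operator bound; combined with the standing bounds on $A$ and $C$ (in particular $\sup_\gamma\int_E|C(\gamma,\theta)|^2\lambda(\gamma)Q(\gamma,d\theta)<\infty$), the closed-loop drift is linear with a bounded coefficient and the feedback $v$ is linearly dominated by $|Y_{t-}|$ in the $\mathcal{L}^2(p)$ norm. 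A standard fixed-point / Picard iteration on each inter-jump interval (only finitely many jumps occur on $[0,T]$, $\mathbb{P}^{0,\gamma_0}$-a.s., since $\lambda$ is bounded and the process is non-explosive) then yields existence, uniqueness, and the required integrability $\mathbb{E}^{0,\gamma_0}[\int_0^T|Y_s|^2ds] + \mathbb{E}^{0,\gamma_0}[\int_0^T\int_E|v_s(\theta)|^2\lambda(\gamma_{s-})Q(\gamma_{s-},d\theta)ds]<\infty$. Since along any trajectory only modes in $Acc(card(E),\gamma_0)$ are visited, the selection is always defined, and $Y_t^{y,v}\in V_\infty\subset KerB_t^\ast$ a.e.

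\textbf{Main obstacle.} The delicate point is the measurable, linear, and $\mathcal{L}^2$-bounded selection of the feedback $\phi_{\gamma,w}$: one needs $\theta\mapsto\phi_{\gamma,w}(\theta)$ to be genuinely square-integrable against $\lambda(\gamma)Q(\gamma,\cdot)$ with a norm controlled by $|w|$, uniformly over the accessible modes, so that $v$ is admissible and the Picard scheme contracts. This is precisely why the proposition restricts to finite $E$ (as flagged in the text before the statement of Proposition \ref{PropNec2}): with finitely many modes one reduces the strict-invariance relation $A^\ast(\gamma)V_\infty\subset V_\infty + \mathcal{C}^\ast(\gamma)(\mathbb{L}^2(\cdots;V_\infty))$ to a finite family of finite-dimensional range conditions and selects $\phi$ by a (mode-dependent) Moore--Penrose pseudoinverse of the operator $\mathcal{C}^\ast(\gamma)$ restricted to $V_\infty$-valued functions, giving the minimal-norm — hence bounded-in-$|w|$ — solution. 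Everything else (the invariance computation for the drift and jumps, the contradiction via Theorem \ref{ThEquivCtrl}) is then routine.
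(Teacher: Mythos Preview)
Your proposal is correct and follows essentially the same strategy as the paper: exploit the $(A^\ast(\gamma);\mathcal{C}^\ast(\gamma))$-strict invariance of $V_\infty$ for every accessible mode to build a linear feedback $v$ in $V_\infty$ that keeps the forward dual trajectory $Y^{y,v}$ inside $V_\infty\subset Ker B_t^\ast$, and then invoke Theorem~\ref{ThEquivCtrl} to force $y=0$. The only notable difference is presentational: where you construct the feedback explicitly via a mode-wise Moore--Penrose pseudoinverse of $\mathcal{C}^\ast(\gamma)\Pi_{V_\infty}$, the paper invokes the general equivalence between $(A;C)$-invariance and feedback invariance (Schmidt--Stern \cite[Theorem 3.2]{Schmidt_Stern_80}, Curtain \cite[Lemma 4.6]{Curtain_86}) to obtain a bounded linear operator $F(\gamma):\mathbb{R}^n\to\mathbb{L}^2(E,\lambda(\gamma)Q(\gamma,\cdot);\mathbb{R}^n)$ making $V_\infty$ invariant under $A^\ast(\gamma)+\mathcal{C}^\ast(\gamma)\Pi_{V_\infty}F(\gamma)$, and sets $v_t(\theta)=\Pi_{V_\infty}F(\gamma_{t-})(\theta)Y_{t-}$; your pseudoinverse selection is precisely one way of producing such an $F$, so the two arguments coincide in substance.
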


\begin{proof}
It is obvious from the definition that the family $\left(  V_{k}\right)
_{k\geq0}$ is decreasing. We set $\mathcal{C}_{\infty}^{\ast}\left(
\gamma\right)  :=\mathcal{C}^{\ast}\left(  \gamma\right)  \Pi_{V_{\infty}}$,
for all $\gamma\in E.$ Then, due to the definition of $V_{\infty},$ one gets
that $V_{\infty}$ is $\left(  A^{\ast}\left(  \gamma\right)  ;\mathcal{C}%
_{\infty}^{\ast}\left(  \gamma\right)  \right)  $-invariant
\[
A^{\ast}\left(  \gamma\right)  V_{\infty}\subset V_{\infty}+\operatorname{Im}%
\mathcal{C}_{\infty}^{\ast}\left(  \gamma\right)  ,
\]
for every $\gamma\in\underset{k\geq0}{\cup}Acc\left(  k,\gamma_{0}\right)  .$
Using \cite[Theorem 3.2]{Schmidt_Stern_80} (see also \cite[Lemma
4.6]{Curtain_86}), the set $V_{\infty}$ is equally $\left(  A^{\ast}\left(
\gamma\right)  ;\mathcal{C}_{\infty}^{\ast}\left(  \gamma\right)  \right)
-$feedback invariant. Thus, there exists some (bounded) linear operator
\[
F\left(  \gamma\right)  :%
\mathbb{R}
^{n}\longrightarrow\mathbb{L}^{2}\left(  E,\mathcal{B}\left(  E\right)
,\lambda\left(  \gamma\right)  Q\left(  \gamma,\cdot\right)  ;%
\mathbb{R}
^{n}\right)
\]
such that $V_{\infty}$ is $A^{\ast}\left(  \gamma\right)  +\mathcal{C}^{\ast
}\left(  \gamma\right)  \Pi_{V_{\infty}}F\left(  \gamma\right)  $- invariant.
We consider the linear stochastic system%
\[
\left\{
\begin{array}
[c]{l}%
dY_{t}^{y,\gamma_{0}}=-\left(  A^{\ast}\left(  \gamma_{t}\right)  +\int%
_{E}C^{\ast}\left(  \gamma_{t},\theta\right)  \Pi_{V_{\infty}}F\left(
\gamma_{t}\right)  \left(  \theta\right)  \lambda\left(  \gamma_{t}\right)
Q\left(  \gamma_{t},d\theta\right)  \right)  Y_{t}^{y,\gamma_{0}}dt\\
\text{ \ \ \ \ \ \ \ \ \ \ }+\int_{E}\Pi_{V_{\infty}}F\left(  \theta\right)
Y_{t-}^{y,\gamma_{0}}q\left(  dt,d\theta\right)  ,\text{ }t\geq0,\\
Y_{0}^{y,\gamma_{0}}=y\in V_{\infty}\subset%
\mathbb{R}
^{n}.
\end{array}
\right.
\]
Then, it is clear that $Y_{t}^{y,\gamma_{0}}=Y_{t}^{y,v^{feedback}}$, where
$Y_{t}^{y,v^{feedback}}$ is the unique solution of (\ref{SDE'}) associated to
the feedback control
\[
v_{t}^{feedback}\left(  \theta\right)  :=\Pi_{V_{\infty}}F\left(
\theta\right)  Y_{t-}^{y,\gamma_{0}},\text{ }t\geq0.
\]
We wish to emphasize that the solution $Y_{\cdot}^{y,\gamma_{0}}$ will be
adapted and c\`{a}dl\`{a}g and, thus, $v_{\cdot}^{feedback}$ is an admissible
control (for details on the structure of predictable processes, we refer the
reader to \cite[Equation 26.4]{davis_93}, \cite[Proposition 4.2.1]{Jacobsen}
or \cite[Appendix A2, Theorem T34]{Bremaud_1981}). For every initial datum
$y\in V_{\infty},$ it is clear that the solution $Y_{\cdot}^{y,\gamma_{0}}\in
V_{\infty},$ $\mathbb{P}^{0,\gamma_{0}}\otimes Leb$ - almost everywhere. In
particular, $B_{t}^{\ast}Y_{t}^{y,v^{feedback}}=0,$ $\mathbb{P}^{0,\gamma_{0}%
}\otimes Leb$ - almost everywhere. If our system is approximately
null-controllable, due to Theorem \ref{ThEquivCtrl}, one has $y=0$ and the
conclusion of our proposition follows recalling that $y\in V_{\infty}$ is arbitrary.
\end{proof}

\subsection{Sufficient Invariance Conditions\label{SubsectionSufficient}}

As we did for $C^{\ast}$ by identifying $C^{\ast}\left(  \gamma,\cdot\right)
$ with a linear operator
\[
\mathcal{C}^{\ast}\left(  \gamma\right)  \in\mathcal{L}\left(  \mathbb{L}%
^{2}\left(  E,\mathcal{B}\left(  E\right)  ,\lambda\left(  \gamma\right)
Q\left(  \gamma,\cdot\right)  ;%
\mathbb{R}
^{n}\right)  ;%
\mathbb{R}
^{n}\right)  ,
\]
we let $\mathcal{I}\left(  \gamma\right)  \in\mathcal{L}\left(  \mathbb{L}%
^{2}\left(  E,\mathcal{B}\left(  E\right)  ,\lambda\left(  \gamma\right)
Q\left(  \gamma,\cdot\right)  ;%
\mathbb{R}
^{n}\right)  ;%
\mathbb{R}
^{n}\right)  $ be associated to the identity (i.e. we define it by setting%
\[
\mathcal{I}\left(  \gamma\right)  \phi=\int_{E}\phi\left(  \theta\right)
\lambda\left(  \gamma\right)  Q\left(  \gamma,d\theta\right)  ,
\]
for all $\phi\in\mathbb{L}^{2}\left(  E,\mathcal{B}\left(  E\right)
,\lambda\left(  \gamma\right)  Q\left(  \gamma,\cdot\right)  ;%
\mathbb{R}
^{n}\right)  $)$.$ As we have seen, the (second) necessary condition for
approximate null-controllability involves some subspace $V_{\infty}\subset
Ker\left(  \left[  B^{0}\left(  \gamma_{0}\right)  \right]  ^{\ast}\right)  $
such that
\[
A^{\ast}\left(  \gamma\right)  V_{\infty}\subset V_{\infty}+\operatorname{Im}%
\mathcal{C}_{\infty}^{\ast}\left(  \gamma\right)  =V_{\infty}%
+\operatorname{Im}\left(  \mathcal{C}^{\ast}\left(  \gamma\right)
\Pi_{V_{\infty}}\right)  ,
\]
at least for $\gamma=\gamma_{0}.$ A slightly stronger (yet similar) condition
written for $\gamma_{0}$ turns out to imply approximate null-controllability.

\begin{proposition}
\label{PropSuf1}If, for every $\gamma_{0}\in E,$ the largest subspace of
$\ Ker\left(  \left[  B^{0}\left(  \gamma_{0}\right)  \right]  ^{\ast}\right)
$ which is $\left(  A^{\ast}\left(  \gamma_{0}\right)  ;\left(  \mathcal{C}%
^{\ast}\left(  \gamma_{0}\right)  +\mathcal{I}\left(  \gamma_{0}\right)
\right)  \Pi_{Ker\left(  \left[  B^{0}\left(  \gamma_{0}\right)  \right]
^{\ast}\right)  }\right)  $ - invariant is reduced to $\left\{  0\right\}  $,
then the system (\ref{SDE0}) is approximately null-controllable.
\end{proposition}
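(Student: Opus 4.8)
The plan is to exhibit, for the criterion in Theorem \ref{ThEquivCtrl}, that every initial datum $y$ producing an admissible control $v$ with $Y^{y,v}_t\in Ker B_t^\ast$ for a.e.\ $(t,\omega)$ must be $y=0$. First I would reduce the analysis to the interval $[0,T_1)$ before the first jump, exactly as in the proof of Proposition \ref{PropNec1}: on $\{T_1>T\}$ the process $\gamma$ stays at $\gamma_0$ and the forward equation \eqref{SDE'} becomes a \emph{deterministic} linear controlled system with state operator $-A^\ast(\gamma_0)-\int_E C^\ast(\gamma_0,\theta)v_t(\theta)\lambda(\gamma_0)Q(\gamma_0,d\theta)$ and driving term $\int_E v_t(\theta)q(dt,d\theta)$. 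On this event the compensated measure contributes $-\int_E v_t(\theta)\lambda(\gamma_0)Q(\gamma_0,d\theta)\,dt$, so the effective deterministic dynamics of $Y$ is governed by $-A^\ast(\gamma_0)Y_t - \mathcal C^\ast(\gamma_0)v_t - \mathcal I(\gamma_0)v_t$, i.e.\ by $-A^\ast(\gamma_0)$ with control range $\operatorname{Im}\bigl((\mathcal C^\ast(\gamma_0)+\mathcal I(\gamma_0))\bigr)$.

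Next I would run the standard controlled-invariance argument. Denote by $W$ the largest subspace of $Ker[(B^0(\gamma_0))^\ast]$ that is $\bigl(A^\ast(\gamma_0);(\mathcal C^\ast(\gamma_0)+\mathcal I(\gamma_0))\Pi_{Ker[(B^0(\gamma_0))^\ast]}\bigr)$-invariant; by hypothesis $W=\{0\}$. Suppose, for contradiction, that there is $y\neq0$ and an admissible $v$ with $Y^{y,v}_t\in Ker B_t^\ast$ a.e. Since $Ker B_t^\ast=Ker[(B^0(\gamma_0))^\ast]$ for all $t$ (the explicit computation of $B_t^+$ in the paper gives $\Pi_{Ker B_t^\ast}=\Pi_{Ker[(B^0(\gamma_0))^\ast]}$), the trajectory stays in the fixed subspace $K:=Ker[(B^0(\gamma_0))^\ast]$ on $\{T_1>T\}$, hence also its derivative lies in $K$ there, which forces $-A^\ast(\gamma_0)Y_t\in K+\operatorname{Im}((\mathcal C^\ast(\gamma_0)+\mathcal I(\gamma_0))\Pi_K)$ along the deterministic trajectory — here one may project the control onto $K$ at no cost because $Y_t\in K$ and replace $v_t$ by $\Pi_K v_t$. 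Consequently the linear span of $\{Y_t : t\in[0,T], T_1>T\}$ is contained in a subspace of $K$ which is $(A^\ast(\gamma_0);(\mathcal C^\ast(\gamma_0)+\mathcal I(\gamma_0))\Pi_K)$-invariant, hence contained in $W=\{0\}$; but $y=Y_0\neq0$, a contradiction.

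The main obstacle is the careful bookkeeping that makes the ``projection onto $K$ at no cost'' step rigorous: one must argue that if a deterministic absolutely continuous curve $t\mapsto Y_t$ takes values in $K$ with a square-integrable control $v$ realizing $\dot Y_t=-A^\ast(\gamma_0)Y_t-(\mathcal C^\ast(\gamma_0)+\mathcal I(\gamma_0))v_t$, then the smallest $A^\ast(\gamma_0)$-almost-invariant-modulo-image subspace containing the reachable set from $y$ is in fact $(A^\ast(\gamma_0);(\mathcal C^\ast(\gamma_0)+\mathcal I(\gamma_0))\Pi_K)$-invariant. This is the classical fact that a trajectory of a linear system constrained to a subspace $K$ generates a controlled-invariant subspace of $K$; I would invoke it via the characterization of the maximal controlled-invariant subspace (e.g.\ the geometric control results of \cite{Schmidt_Stern_80,Curtain_86} already cited, or a direct iteration computing $\operatorname{span}\{Y^{(j)}_0\}$ and checking the invariance relation by differentiating the constraint $Y_t\in K$). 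The remaining ingredients — that on $\{T_1>T\}$, which has positive probability $e^{-Tc_0}>0$, the stochastic constraint degenerates to the deterministic one, and that $Ker B_t^\ast$ is constant in $t$ — are exactly the computations already carried out in the excerpt, so they can be quoted rather than redone.
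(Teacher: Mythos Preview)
There is a real gap in the step you flag as ``the main obstacle'', and it is not a matter of bookkeeping. From the deterministic constraint $Y_t\in K:=\ker[(B^0(\gamma_0))^\ast]$ on $\{T_1>T\}$ you only obtain $\dot Y_t\in K$, hence $A^\ast(\gamma_0)Y_t\in K+\operatorname{Im}\bigl(\mathcal C^\ast(\gamma_0)+\mathcal I(\gamma_0)\bigr)$. This places the trajectory inside the largest subspace of $K$ that is $\bigl(A^\ast(\gamma_0);\mathcal C^\ast(\gamma_0)+\mathcal I(\gamma_0)\bigr)$-invariant, \emph{not} inside the largest one that is $\bigl(A^\ast(\gamma_0);(\mathcal C^\ast(\gamma_0)+\mathcal I(\gamma_0))\Pi_K\bigr)$-invariant. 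The latter space is in general strictly smaller, and only the latter is assumed to be $\{0\}$ in the hypothesis. Your sentence ``one may project the control onto $K$ at no cost because $Y_t\in K$'' is precisely the unjustified leap: nothing in the deterministic picture forces $v_t(\theta)\in K$, and the classical controlled-invariance results you cite (\cite{Schmidt_Stern_80,Curtain_86}) produce only $(A^\ast;D)$-invariance from viability in $K$, not $(A^\ast;D\Pi_K)$-invariance.

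The missing ingredient is genuinely stochastic and is lost by conditioning on $\{T_1>T\}$ too early. The paper first applies It\^o's formula to $(I-\Pi_K)Y_t^{y,v}$ on the full probability space: since this process vanishes $\mathbb P\otimes Leb$-a.e., its martingale part vanishes, and computing the quadratic variation yields
\[
\mathbb E\Bigl[\int_0^t\!\!\int_E\bigl|(I-\Pi_K)v_s(\theta)\bigr|^2\lambda(\gamma_s)Q(\gamma_s,d\theta)\,ds\Bigr]=0,
\]
so that $v_s(\theta)\in K$ a.e.\ (this is equation (\ref{eq1}) in the paper). Intuitively, the jump of $Y$ at time $T_1$ equals $v_{T_1}(\gamma_{T_1})$, and since the trajectory must remain in $K$ \emph{across} the jump as well, the control itself is forced into $K$; restricting to $\{T_1>T\}$ discards exactly this constraint. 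Once $v\in K$ is established, the iterative construction of the $(A^\ast(\gamma_0);(\mathcal C^\ast(\gamma_0)+\mathcal I(\gamma_0))\Pi_K)$-invariant subspace that you sketch goes through and matches the paper's argument.
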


\begin{proof}
For fixed $\gamma_{0}\in E$, we drop the dependency on $\gamma_{0}$ in
$\mathbb{P}^{0,\gamma_{0}}.$ For $y\in%
\mathbb{R}
^{n}$ and $v\in\mathcal{L}^{2}\left(  p;%
\mathbb{R}
^{n}\right)  ,$ we let $Y^{y,v}$ denote the unique controlled solution of
(\ref{SDE'}). It\^{o}'s formula (see \cite{Ikeda_Watanabe_1981} or
\cite[Theorem 31.3]{davis_93}) yields%
\begin{align*}
&  d\left[  \left(  I-\Pi_{Ker\left(  B_{t}^{\ast}\right)  }\right)
Y_{t}^{y,v}\right] \\
&  =\int_{E}\left(  I-\Pi_{Ker\left(  B_{t}^{\ast}\right)  }\right)
v_{t}\left(  \theta\right)  q\left(  dt,d\theta\right)  -\left(
I-\Pi_{Ker\left(  B_{t}^{\ast}\right)  }\right)  A^{\ast}\left(  \gamma
_{t}\right)  Y_{t}^{y,v}dt\\
&  -\int_{E}\left(  I-\Pi_{Ker\left(  B_{t}^{\ast}\right)  }\right)  C^{\ast
}\left(  \theta\right)  v_{t}\left(  \theta\right)  \lambda\left(  \gamma
_{t}\right)  Q\left(  \gamma_{t},d\theta\right)  dt.
\end{align*}
Whenever $B_{t}^{\ast}Y_{t}^{y,v}=0,$ $\mathbb{P\otimes}Leb$ almost everywhere
on $\Omega\times\left[  0,T\right]  ,$ one easily gets (by computing quadratic
variation)
\[
0=\mathbb{E}\left[  \int_{0}^{t}\int_{E}\left\vert \left(  I-\Pi_{Ker\left(
B_{s}^{\ast}\right)  }\right)  v_{s}\left(  \theta\right)  \right\vert
^{2}\lambda\left(  \gamma_{s}\right)  Q\left(  \gamma_{s},d\theta\right)
ds\right]  .
\]
We recall that $v$ is a predictable process and (see \cite[Equation
26.4]{davis_93}, \cite[Proposition 4.2.1]{Jacobsen} or \cite[Appendix A2,
Theorem T34]{Bremaud_1981} for further details)
\[
v_{t}\left(  \theta\right)  =v_{1}\left(  \theta,t\right)  1_{t\leq T_{1}%
}+\sum_{n\geq2}v_{n}\left(  \theta,t,\gamma^{1},...,\gamma^{n-1}\right)
1_{T_{n-1}<t\leq T_{n}},
\]
for some (deterministic) measurable functions $v_{n}.$ In particular, if
$\lambda\left(  \gamma_{0}\right)  >0$ (otherwise, we have a deterministic
system)$,$ then
\begin{equation}
v_{1}\left(  \theta,t\right)  \in Ker\left(  B_{t}^{\ast}\right)  =Ker\left(
\left[  B^{0}\left(  \gamma_{0}\right)  \right]  ^{\ast}\right)  , \label{eq1}%
\end{equation}
for $Q\left(  \gamma_{0},\cdot\right)  \otimes Leb$ almost all $\left(
\theta,t\right)  \in E\times%
\mathbb{R}
_{+}.$ \ We recall that, up to $T_{1}$, $Y^{y,v}$ coincides with $\phi^{y,v}$,
the solution of the deterministic system%
\[
\left\{
\begin{array}
[c]{l}%
d\phi_{t}^{y,v}=\left(  -A^{\ast}\left(  \gamma_{0}\right)  \phi_{t}%
^{y,v}-\int_{E}\left(  C^{\ast}\left(  \gamma_{0},\theta\right)  +I\right)
v_{1}\left(  \theta,t\right)  \lambda\left(  \gamma_{0}\right)  Q\left(
\gamma_{0},d\theta\right)  \right)  dt,\\
t\geq0,\\
\phi_{0}^{y,v}=y\in%
\mathbb{R}
^{n}.
\end{array}
\right.
\]
As a consequence of the fact that $Y^{y,v}\in Ker\left(  \left[  B^{0}\left(
\gamma_{0}\right)  \right]  ^{\ast}\right)  $ and $T_{1}$ is exponentially
distributed, it follows that
\begin{equation}
\left(  A^{\ast}\left(  \gamma_{0}\right)  \phi_{t}^{y,v}+\int_{E}\left(
C^{\ast}\left(  \theta\right)  +I\right)  v_{1}\left(  \theta,t\right)
\lambda\left(  \gamma_{0}\right)  Q\left(  \gamma_{0},d\theta\right)  \right)
\in Ker\left(  \left[  B^{0}\left(  \gamma_{0}\right)  \right]  ^{\ast
}\right)  , \label{eq2}%
\end{equation}
for almost all $t\geq0.$ At this point we recall the iterative construction of
invariant spaces. To this purpose, let us introduce the linear subspace
\begin{equation}
V^{0}=\left\{
\begin{array}
[c]{l}%
y^{\prime}\in Ker\left[  \left(  B^{0}\left(  \gamma_{0}\right)  \right)
^{\ast}\right]  :\\
\exists u\in\mathbb{L}^{2}\left(  E,\mathcal{B}\left(  E\right)  ,Q\left(
\gamma_{0},d\theta\right)  ;Ker\left[  \left(  B^{0}\left(  \gamma_{0}\right)
\right)  ^{\ast}\right]  \right)  s.t.\\
A^{\ast}\left(  \gamma_{0}\right)  y^{\prime}+\left(  \mathcal{C}^{\ast
}\left(  \gamma_{0}\right)  +\mathcal{I}\left(  \gamma_{0}\right)  \right)
u\in Ker\left[  \left(  B^{0}\left(  \gamma_{0}\right)  \right)  ^{\ast
}\right]  .
\end{array}
\right\}  \label{defV0}%
\end{equation}
Then, (\ref{eq1}) and (\ref{eq2}) imply that $\phi_{t}^{y,v}\in V^{0}$, for
almost all $t\geq0.$ Hence, using the linear character of $V^{0}$, we infer
\[
\left(  A^{\ast}\left(  \gamma_{0}\right)  \phi_{t}^{y,v}+\int_{E}\left(
C^{\ast}\left(  \gamma_{0},\theta\right)  +I\right)  v_{1}\left(
\theta,t\right)  \lambda\left(  \gamma_{0}\right)  Q\left(  \gamma_{0}%
,d\theta\right)  \right)  \in V^{0},
\]
for almost all $t\geq0.$ We then define%
\[
V^{1}=\left\{
\begin{array}
[c]{c}%
y^{\prime}\in V^{0}:\exists u\in\mathbb{L}^{2}\left(  E,\mathcal{B}\left(
E\right)  ,Q\left(  \gamma_{0},d\theta\right)  ;Ker\left[  \left(
B^{0}\left(  \gamma_{0}\right)  \right)  ^{\ast}\right]  \right)  s.t.\\
A^{\ast}\left(  \gamma_{0}\right)  y^{\prime}+\left(  \mathcal{C}^{\ast
}\left(  \gamma_{0}\right)  +\mathcal{I}\left(  \gamma_{0}\right)  \right)
u\in V^{0}.
\end{array}
\right\}
\]
and we deduce, as before, that $\phi_{t}^{y,v}\in V^{1}$, for almost all
$t\geq0,$ and so on for $V^{i},$ $i>1.$ We recall that $V^{i}\subset%
\mathbb{R}
^{n}.$ Hence, $V:=\underset{i=0,n}{\cap}V^{i}$ is a linear subspace of
$Ker\left[  \left(  B^{0}\left(  \gamma_{0}\right)  \right)  ^{\ast}\right]  $
which is $\left(  A^{\ast}\left(  \gamma_{0}\right)  ;\mathcal{C}^{\ast
}\left(  \gamma_{0}\right)  +\mathcal{I}\left(  \gamma_{0}\right)  \right)
$-invariant and such that $\phi_{t}^{y,v}\in V$, for almost all $t\geq0$. By
assumption, $V=\left\{  0\right\}  .$ Thus, $\phi_{t}^{y,v}=0$ for almost all
$t\geq0$ and, due to the continuity, $y=0.$ In particular, it follows that,
whenever $B_{t}^{\ast}Y_{t}^{y,v}=0,$ $\mathbb{P\otimes}Leb$ almost everywhere
on $\Omega\times\left[  0,T\right]  $, one has to have $y=0$ i.e. our system
is approximately null-controllable.
\end{proof}

\begin{remark}
\label{ViabKernel}(i) If $W$ is the largest subspace of $Ker\left[  \left(
B^{0}\left(  \gamma_{0}\right)  \right)  ^{\ast}\right]  $ which is $\left(
A^{\ast}\left(  \gamma_{0}\right)  ;\mathcal{C}^{\ast}\left(  \gamma
_{0}\right)  \right)  -$strictly invariant, then it is also $\left(  A^{\ast
}\left(  \gamma_{0}\right)  ;\left(  \mathcal{C}^{\ast}\left(  \gamma
_{0}\right)  +\mathcal{I}\left(  \gamma_{0}\right)  \right)  \Pi_{W}\right)  $
- invariant and, hence, $\left(  A^{\ast}\left(  \gamma_{0}\right)  ;\left(
\mathcal{C}^{\ast}\left(  \gamma_{0}\right)  +\mathcal{I}\left(  \gamma
_{0}\right)  \right)  \Pi_{Ker\left[  \left(  B^{0}\left(  \gamma_{0}\right)
\right)  ^{\ast}\right]  }\right)  $- invariant. Under the sufficient
condition described in the previous proposition, it follows that $W=\left\{
0\right\}  .$

(ii) The space $V^{0}$ is what is called the viability kernel of $Ker\left[
\left(  B^{0}\left(  \gamma_{0}\right)  \right)  ^{\ast}\right]  $ w.r.t. the
deterministic system (i.e. the larger set of initial data $y$ such that the
solution stays in $Ker\left[  \left(  B^{0}\left(  \gamma_{0}\right)  \right)
^{\ast}\right]  $). We emphasize that a solution starting from $V^{0}$ stays
in $V^{0}$ (prior to jump). In other words, this viability kernel is viable
w.r.t the deterministic system. However we cannot guarantee that this is still
the case w.r.t. the stochastic equation (the controls $u$ may not take their
values in $V^{0}$)$.$This possible non-viability is the reason we cannot (in
all generality) go any further to obtain a result similar to the one in
Proposition \ref{PropNec2}.

(iii) In the definition (\ref{defV0}), one can replace $\left(  \mathcal{C}%
^{\ast}\left(  \gamma_{0}\right)  +\mathcal{I}\left(  \gamma_{0}\right)
\right)  $ by $\mathcal{C}^{\ast}\left(  \gamma_{0}\right)  .$ However, in
general, this is no longer the case in $V^{i}$ when $i\geq1$.
\end{remark}

\section{Examples and Counterexamples, Equivalent
Criteria\label{SectionExpEquiv}}

We begin this section with some simple counterexamples. We examine several
cases. First, we show that the first necessary condition (given in Proposition
\ref{PropNec1}) may fail to imply the controllability of the associated
deterministic system, and, hence, of the stochastic one. In a second example
we show that it is possible that the first necessary condition (given in
Proposition \ref{PropNec1}) together with the Kalman condition for the
controllability of the deterministic system should fail to imply the second
necessary condition (of Proposition \ref{PropNec2}). The third example shows
that for multimode systems (i.e. non-constant coefficients), the necessary
condition of Proposition \ref{PropNec2} may fail to imply the condition given
in Proposition \ref{PropNec1} (and, thus, is not sufficient for the general
approximate null-controllability). We emphasize that in the last two examples
the Kalman condition for controllability of the associated deterministic
system is satisfied but it does not imply the approximate null-controllability
of the initial stochastic system.

Let us consider some simple examples in which the necessary condition given in
Proposition \ref{PropNec1} is satisfied and it fails to imply the approximate
null-controllability. To this purpose, we consider a two-dimensional state
space and a one-dimensional control space ($n=2,$ $d=1$). The systems will be
bimodal governed by $E=\left\{  0,1\right\}  ,$ the control will only act on
the first component of the state space $B^{0}\left(  0\right)  =B^{0}\left(
1\right)  =B^{0}:=\left(
\begin{array}
[c]{c}%
1\\
0
\end{array}
\right)  $. The control matrix $B_{t}$ is set to be constant (its equation has
$\beta=0$)$.$ The pure jump component will switch between $0$ and $1$ in
exponential time with the same parameter $\lambda\left(  0\right)
=\lambda\left(  1\right)  =1,$ $Q\left(  0,d\theta\right)  =\delta_{1}\left(
d\theta\right)  ,$ $Q\left(  1,d\theta\right)  =\delta_{0}\left(
d\theta\right)  $. \ One easily notes that $Ker\left(  \left(  B^{0}\right)
^{\ast}\right)  =\left\{  \left(
\begin{array}
[c]{c}%
0\\
y
\end{array}
\right)  :y\in%
\mathbb{R}
\right\}  .$ Let us consider $C\left(  i,j\right)  =\left(
\begin{array}
[c]{cc}%
0 & \frac{1}{2}\\
\frac{1}{2} & 0
\end{array}
\right)  $, for all $i,j\in E.$

\begin{example}
\label{Nec1notDetCtrl}We first consider $A\left(  0\right)  =A\left(
1\right)  =0\in%
\mathbb{R}
^{2\times2}.$ We get
\[
A^{\ast}\left(  \gamma\right)  -\lambda\left(  \gamma\right)  \int_{E}C^{\ast
}\left(  \gamma,\theta\right)  Q\left(  \gamma,d\theta\right)  =-\left(
\begin{array}
[c]{cc}%
0 & \frac{1}{2}\\
\frac{1}{2} & 0
\end{array}
\right)  ,\text{ for all }\gamma\in E.
\]
One notes that
\[
\left(  A^{\ast}\left(  \gamma\right)  -\lambda\left(  \gamma\right)  \int%
_{E}C^{\ast}\left(  \gamma,\theta\right)  Q\left(  \gamma,d\theta\right)
\right)  \left(
\begin{array}
[c]{c}%
0\\
y
\end{array}
\right)  =\left(
\begin{array}
[c]{c}%
-\frac{y}{2}\\
0
\end{array}
\right)  \in Ker\left(  \left(  B^{0}\right)  ^{\ast}\right)
\]
if and only if $y=0.$ Hence, the condition given in Proposition \ref{PropNec1}
is satisfied. The system (\ref{SDE0}) has the particular form
\[
\left\{
\begin{array}
[c]{l}%
dX_{s}^{\left(  x,y\right)  ,u}=d\left(
\begin{array}
[c]{c}%
x_{s}^{\left(  x,y\right)  ,u}\\
y_{s}^{\left(  x,y\right)  ,u}%
\end{array}
\right)  =\left(
\begin{array}
[c]{c}%
u_{s}\\
0
\end{array}
\right)  ds+\frac{1}{2}\int_{E}\left(
\begin{array}
[c]{c}%
y_{s-}^{\left(  x,y\right)  ,\gamma,u}\\
x_{s-}^{\left(  x,y\right)  ,\gamma,u}%
\end{array}
\right)  q\left(  ds,d\theta\right)  ,\\
X_{0}^{\left(  x,y\right)  ,u}=\left(
\begin{array}
[c]{c}%
x\\
y
\end{array}
\right)  \in%
\mathbb{R}
^{2}.
\end{array}
\right.
\]
However, one easily notes that $\ \mathbb{E}\left[  y_{s}^{\left(  x,y\right)
,u}\right]  =y$, for all $s>0$. Thus, by taking $y\neq0,$ the previous system
cannot be steered onto arbitrarily small neighborhoods of $\left(
\begin{array}
[c]{c}%
0\\
0
\end{array}
\right)  $.
\end{example}

Due to this example, the reader will get an obvious necessary condition (at
least for constant $A,B^{0})$ : in order for the system (\ref{SDE0}) to be
approximately controllable if $B^{0}\left(  \gamma\right)  =B^{0}$ and
$A\left(  \gamma\right)  =A,$ for all $\gamma\in E,$ the associated
deterministic system (corresponding to the expectation of $X$)
\[
\left\{
\begin{array}
[c]{l}%
dx_{s}^{x,\widetilde{u}}=\left(  Ax_{s}^{x,\widetilde{u}}+B^{0}\widetilde{u}%
_{s}\right)  ds,\text{ }s\geq0\\
x_{0}^{x,\widetilde{u}}=x,
\end{array}
\right.
\]
should be controllable or, equivalently, due to Kalman's criterion,
\[
Rank\left[  B^{0},\text{ }AB^{0},...,\text{ }A^{n-1}B^{0}\right]  =n.
\]
Naturally, this condition is not satisfied in the previous example.

At this point, the reader may be interested in knowing if this supplementary
necessary condition (the controllability of the associated deterministic
system) implies the approximate null-controllability for the stochastic system
(at least for constant $A,B^{0},C).$ The answer is negative as shown by the following.

\begin{example}
\label{Nec1+DetCtrlnotNec2}The coefficients $B^{0}$ and $C$ are taken as
before and the framework is similar. However, we take $A\left(  0\right)
=A\left(  1\right)  =A:=\left(
\begin{array}
[c]{cc}%
0 & 1\\
1 & 0
\end{array}
\right)  $. Then, we get
\[
A^{\ast}\left(  \gamma\right)  -\lambda\left(  \gamma\right)  \int_{E}C^{\ast
}\left(  \gamma,\theta\right)  Q\left(  \gamma,d\theta\right)  =\left(
\begin{array}
[c]{cc}%
0 & \frac{1}{2}\\
\frac{1}{2} & 0
\end{array}
\right)  ,\text{ for all }\gamma\in E.
\]
and the condition given in Proposition \ref{PropNec1} is again satisfied (as
in the previous case). It is clear that $AB^{0}=\left(
\begin{array}
[c]{c}%
0\\
1
\end{array}
\right)  $ and, thus, the Kalman condition is also satisfied.

Nevertheless, if one considers
\[
\xi:=\left(  -1\right)  ^{p\left(  \left[  0,T\right]  ,E\right)  }\left(
\begin{array}
[c]{c}%
0\\
e^{2T}%
\end{array}
\right)  ,Y_{t}:=\left(  -1\right)  ^{p\left(  \left[  0,t\right]  ,E\right)
}\left(
\begin{array}
[c]{c}%
0\\
e^{2t}%
\end{array}
\right)
\]
and $Z_{t}:=-2Y_{t-},$ $\left(  Y,Z\right)  $ is the unique solution of the
BSDE (\ref{BSDE0}) with final data $\xi.$ It is clear that $Y_{t}\in
Ker\left(  \left(  B^{0}\right)  ^{\ast}\right)  $ for all $t\in\left[
0,T\right]  .$ However, $Y_{0}=\left(
\begin{array}
[c]{c}%
0\\
1
\end{array}
\right)  $ $\neq\left(
\begin{array}
[c]{c}%
0\\
0
\end{array}
\right)  $ almost surely and, due to Theorem \ref{TheoremCar}, the system is
not approximately null-controllable. (Alternatively, one may want to note that
$V_{\infty}=V_{0}=Ker\left[  \left(  B^{0}\right)  ^{\ast}\right]  $ and apply
Proposition \ref{PropNec2} to get the same conclusion).
\end{example}

The examples we have seen so far show that, even for the case in which the
coefficients are constant, the necessary condition given in Proposition
\ref{PropNec1} might not be sufficient. In the same framework, even though we
have the exact controllability of the associated deterministic system, the
stochastic one might not be approximately null-controllable.

It is worth pointing out that, in particular cases (constant coefficients and
Poisson random measure-driven systems), the necessary condition given by
Proposition \ref{PropNec2} turns out to actually be sufficient. This will be
the purpose of the next Subsection. However, for general systems, this
condition may also fail to imply the approximate null-controllability. To
illustrate this assertion, we consider the following switch system.

\begin{example}
\label{Nec2+DetCtrlnotNec1}We let $n=3,$ $d=1$, $E=\left\{  0,1\right\}  ,$
$B^{0}\left(  0\right)  =B^{0}\left(  1\right)  =B^{0}:=\left(
\begin{array}
[c]{c}%
1\\
0\\
0
\end{array}
\right)  $, $\beta=0,$ $\lambda\left(  0\right)  =\lambda\left(  1\right)
=1,$ $Q\left(  0,d\theta\right)  =\delta_{1}\left(  d\theta\right)  ,$
$Q\left(  1,d\theta\right)  =\delta_{0}\left(  d\theta\right)  $. \ Moreover,
we denote by $e_{1}=\left(
\begin{array}
[c]{c}%
1\\
0\\
0
\end{array}
\right)  ,e_{2}=\left(
\begin{array}
[c]{c}%
0\\
1\\
0
\end{array}
\right)  ,e_{3}=\left(
\begin{array}
[c]{c}%
0\\
0\\
1
\end{array}
\right)  .$ One easily notes that $Ker\left(  \left(  B^{0}\right)  ^{\ast
}\right)  =span\left(  e_{2},e_{3}\right)  .$ We consider $A\left(  0\right)
:=\left(
\begin{array}
[c]{ccc}%
0 & 0 & 0\\
1 & 0 & 0\\
0 & 1 & 0
\end{array}
\right)  ,$ $A\left(  1\right)  :=\left(
\begin{array}
[c]{ccc}%
0 & 0 & 0\\
0 & 0 & 1\\
1 & 0 & 0
\end{array}
\right)  ,$ $C\left(  0,1\right)  :=\left(
\begin{array}
[c]{ccc}%
0 & 0 & 0\\
0 & 0 & 0\\
0 & 1 & 0
\end{array}
\right)  $, $C\left(  1,0\right)  :=\left(
\begin{array}
[c]{ccc}%
0 & 0 & 0\\
0 & 0 & 1\\
0 & 0 & 0
\end{array}
\right)  .$ The reader is invited to note that $Rank\left(  B^{0}\text{
}A\left(  0\right)  B^{0}\text{ }A\left(  0\right)  ^{2}B^{0}\right)  =3$ and,
thus, the associated deterministic system is controllable. The same assertion
holds true for $A\left(  1\right)  $ replacing $A\left(  0\right)  $.
Moreover, $A\left(  0\right)  ^{\ast}\left(  ye_{2}+ze_{3}\right)
=ye_{1}+ze_{2},$ $C\left(  0,1\right)  ^{\ast}\left(  ye_{2}+ze_{3}\right)
=ze_{2}.$ Hence, the largest subspace of $Ker\left(  \left(  B^{0}\right)
^{\ast}\right)  $ which is $\left(  A\left(  0\right)  ^{\ast};C\left(
0,1\right)  ^{\ast}\Pi_{Ker\left(  \left(  B^{0}\right)  ^{\ast}\right)
}\right)  $-invariant is $span\left(  e_{3}\right)  $. Similarly, the largest
subspace of $Ker\left(  \left(  B^{0}\right)  ^{\ast}\right)  $ which is
$\left(  A\left(  1\right)  ^{\ast};C\left(  1,0\right)  ^{\ast}%
\Pi_{Ker\left(  \left(  B^{0}\right)  ^{\ast}\right)  }\right)  $-invariant is
$span\left(  e_{2}\right)  $. We deduce that the space $V_{\infty}$ appearing
in Proposition \ref{PropNec2} is reduced to $\left\{  0\right\}  .$ However,
\[
A^{\ast}\left(  0\right)  -\lambda\left(  0\right)  \int_{E}C^{\ast}\left(
0,\theta\right)  Q\left(  0,d\theta\right)  =\left(
\begin{array}
[c]{ccc}%
0 & 1 & 0\\
0 & 0 & 0\\
0 & 0 & 0
\end{array}
\right)
\]
and the space $span\left(  e_{3}\right)  $ is $A^{\ast}\left(  0\right)
-\lambda\left(  0\right)  \int_{E}C^{\ast}\left(  0,\theta\right)  Q\left(
0,d\theta\right)  $-invariant. Similar assertions hold true for $\gamma
_{0}=1.$ It follows that, although the necessary condition in \ref{PropNec2}
holds true, the necessary condition given by Proposition \ref{PropNec1} is not
satisfied. Hence, the system is not approximately null-controllable.
\end{example}

\subsection{The Constant Coefficients Case\label{SubsectionConstantCoeff}}

Throughout the subsection, we fix $\gamma_{0}\in E$ and drop the dependency of
$\gamma_{0}$ in $\mathbb{P}^{0,\gamma_{0}}$. We consider the following
particular form of the system (\ref{SDE0}).%
\begin{equation}
\left\{
\begin{array}
[c]{l}%
dX_{s}^{x,u}=\left[  AX_{s}^{x,u}+Bu_{s}\right]  ds+\int_{E}C\left(
\theta\right)  X_{s-}^{x,u}q\left(  ds,d\theta\right)  ,\text{ }s\geq0,\\
X_{0}^{x,u}=x,
\end{array}
\right.  \label{SDE0'}%
\end{equation}
where $A\in%
\mathbb{R}
^{n\times n},$ $B\in%
\mathbb{R}
^{n\times d}$ are fixed and $C\left(  \theta\right)  \in%
\mathbb{R}
^{n\times n},$ for all $\theta\in E$ such that $\lambda\left(  \gamma\right)
Q\left(  \gamma,d\theta\right)  $ is independent of $\gamma\in E.$ In this
case, $q$ corresponds to a (compensated) Poisson random measure. We let $p$
denote the Poisson random measure and $\nu\left(  d\theta\right)
:=\lambda\left(  \gamma\right)  Q\left(  \gamma,d\theta\right)  $ be its
L\'{e}vy measure in this framework. Let us point out that more general
$\sigma$-finite L\'{e}vy measures satisfying a suitable second order moment
condition can be considered and the arguments are identical. However, for
coherence reasons, we work with the (finite) measure given before. We assume
that $C\in\mathbb{L}^{2}\left(  E,\mathcal{B}\left(  E\right)  ,\nu;%
\mathbb{R}
^{n\times n}\right)  $. As we have already hinted before, we interpret the
system (\ref{BSDE0}) as a controlled, forward one. In this case, the system
(\ref{SDE'}) takes the particular form%
\begin{equation}
\left\{
\begin{array}
[c]{l}%
dY_{t}^{y,v}=\left(  -A^{\ast}Y_{t}^{y,v}-\int_{E}C^{\ast}\left(
\theta\right)  v_{t}\left(  \theta\right)  \nu\left(  d\theta\right)  \right)
dt+\int_{E}v_{t}\left(  \theta\right)  q\left(  dt,d\theta\right)  ,\text{
}t\geq0,\\
Y_{0}^{y,v}=y\in%
\mathbb{R}
^{n}.
\end{array}
\right.  \label{SDE''}%
\end{equation}

Let us point out that, in this case, Proposition \ref{PropNec2} yields that,
whenever the system (\ref{SDE0'}) is approximately null-controllable in time
$T>0,$ the largest subspace $V_{0}\subset KerB^{\ast}$ which is $\left(
A^{\ast};\mathcal{C}^{\ast}\Pi_{V_{0}}\right)  $-invariant is reduced to
$\left\{  0\right\}  $ (i.e. the largest subspace of $KerB^{\ast}$ which is
$\left(  A^{\ast};\mathcal{C}^{\ast}\right)  -$strictly invariant is reduced
to $\left\{  0\right\}  $)$.$ We recall that the linear operator
$\mathcal{C}^{\ast}:\mathbb{L}^{2}\left(  E,\mathcal{B}\left(  E\right)  ,\nu;%
\mathbb{R}
^{n}\right)  \longrightarrow%
\mathbb{R}
^{n}$ is given by%
\[
\mathcal{C}^{\ast}\phi=\int_{E}C^{\ast}\left(  \theta\right)  \phi\left(
\theta\right)  \nu\left(  d\theta\right)  ,\text{ for all }\phi\in
\mathbb{L}^{2}\left(  E,\mathcal{B}\left(  E\right)  ,\nu;%
\mathbb{R}
^{n}\right)  \text{.}%
\]
We wish to prove that this condition is also sufficient in order to have
approximate null-controllability. In fact, in this context, we show that not
only do we have approximate null-controllability, but we actually get
approximate controllability.

A careful look at the proof of Proposition \ref{PropSuf1} and Remark
\ref{ViabKernel} (ii) leads to considering%
\[
Viab_{loc}\left(  KerB^{\ast}\right)  =\left\{
\begin{array}
[c]{c}%
y\in KerB^{\ast}:\exists T>0,\text{ }v\in\mathcal{L}^{2}\left(  p;%
\mathbb{R}
^{n}\right)  \text{ s.t. }\\
Y^{y,v}\in KerB^{\ast},\text{ }\mathbb{P\otimes}Leb-a.s.\text{ on }%
\Omega\times\left[  0,T\right]
\end{array}
\right\}  .
\]
When $y\in Viab_{loc}\left(  KerB^{\ast}\right)  ,$ one finds a solution which
is constrained to $KerB^{\ast}.$ The control problems with state constraints
can be transformed into unconstrained ones by adding a penalty when the
constraint is not satisfied. Heuristically, in this context, one may associate
to $y\in%
\mathbb{R}
^{n}$ a cost which involves a penalty term $N\mathbb{E}\left[  \int_{0}%
^{T}\left\vert \Pi_{\left[  KerB^{\ast}\right]  ^{\bot}}\left(  Y_{t}%
^{y,v}\right)  \right\vert ^{2}dt\right]  $ and allow $N\rightarrow\infty$.
Since this type of cost belongs to the class of LQ (linear-quadratic) control
problems, it can be addressed with the theory of Riccati equations. We
consider the sequence of (deterministic) Riccati equations%
\[
\left\{
\begin{array}
[c]{l}%
dK_{t}^{N}=\left[
\begin{array}
[c]{c}%
-K_{t}^{N}A^{\ast}-AK_{t}^{N}+N\Pi_{\left[  KerB^{\ast}\right]  ^{\bot}}\\
-K_{t}^{N}\int_{E}C^{\ast}\left(  \theta\right)  \left(  I+K_{t}^{N}\right)
^{-1}C\left(  \theta\right)  \nu\left(  d\theta\right)  K_{t}^{N}%
\end{array}
\right]  dt,\text{ }t\geq0,\\
K_{0}^{N}=0.
\end{array}
\right.
\]
The proof for the existence and uniqueness of the solution is quite standard.
It relies on successive iterations (see \cite[Chap. 6, Cor. 2.10 and Prop.
2.12 ]{yong_zhou_99}) and will be omitted from the present paper. Obviously,
the sequence $\left(  K^{N}\right)  _{N\geq1}$ is non-decreasing in the family
$\mathcal{S}_{n}^{+}$ of symmetric, positive semi-definite matrix.

\begin{remark}
In fact, in the theory of stochastic control, one would be led to consider the
following BSDE%
\[
\left\{
\begin{array}
[c]{l}%
dR_{t}^{N}\\
=\left(  R_{t}^{N}A^{\ast}+AR_{t}^{N}-N\Pi_{\left[  Ker\left(  B^{\ast
}\right)  \right]  ^{\bot}}\right)  dt+\int_{E}H_{t}^{N}\left(  \theta\right)
q\left(  dt,d\theta\right) \\
+\int_{E}\left(  f_{t}^{N}\left(  \theta\right)  \right)  ^{\ast}\left(
I+R_{t}^{N}+H_{t}^{N}\left(  \theta\right)  \right)  ^{-1}f_{t}^{N}\left(
\theta\right)  \nu\left(  d\theta\right)  dt,\\
\text{where }f_{t}^{N}\left(  \theta\right)  :=\left(  C\left(  \theta\right)
R_{t}^{N}-H_{t}^{N}\left(  \theta\right)  \right)  ,\text{ }t\in\left[
0,T\right]  .\\
R_{T}^{N}=0,\text{ }I+R_{t}^{N}+H_{t}^{N}\left(  \theta\right)  >0,\text{ for
almost all }t\in\left[  0,T\right]  .
\end{array}
\right.
\]
However, in our particular setting, the solution of the BSDE is got by setting
$R_{t}^{N}=K_{T-t}^{N}$ and $H_{t}^{N}=0.$
\end{remark}

We get the following.

\begin{proposition}
\label{PropViablocKerB*}If $y\in Viab_{loc}\left(  KerB^{\ast}\right)  ,$
then, there exists $T>0$ and $v\in\mathcal{L}^{2}\left(  p;%
\mathbb{R}
^{n}\right)  $ such that $Y_{t}^{y,v}\in Viab_{loc}\left(  KerB^{\ast}\right)
, $ $\mathbb{P\otimes}Leb-a.s.$ on $\Omega\times\left[  0,T\right]  .$
\end{proposition}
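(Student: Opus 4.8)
The plan is to exploit the Riccati machinery set up just before the statement to produce, for each viable initial point, a control that keeps the trajectory not only in $KerB^{\ast}$ but actually in $Viab_{loc}\left(KerB^{\ast}\right)$. First I would recall the key fact about the family $\left(K^{N}\right)_{N\geq1}$: it is non-decreasing in $\mathcal{S}_{n}^{+}$, and — this is the step that needs care — it is uniformly bounded on a fixed interval $\left[0,T\right]$ precisely on the subspace $Viab_{loc}\left(KerB^{\ast}\right)$. Indeed, if $y\in Viab_{loc}\left(KerB^{\ast}\right)$, the constrained control $v$ from the definition has finite $\mathcal{L}^{2}\left(p;\mathbb{R}^{n}\right)$-norm and makes the penalty term $N\mathbb{E}\left[\int_{0}^{T}\left\vert \Pi_{\left[KerB^{\ast}\right]^{\bot}}Y_{t}^{y,v}\right\vert^{2}dt\right]$ vanish, so by the standard LQ verification identity $\left\langle K_{T}^{N}y,y\right\rangle$ is bounded uniformly in $N$ by the (finite, $N$-independent) quadratic cost of $v$. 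Hence $K_{T}^{N}y\to K_{T}^{\infty}y$ as $N\to\infty$ along this subspace, and the monotone limit $K^{\infty}$ solves the limiting Riccati equation on $Viab_{loc}\left(KerB^{\ast}\right)$.

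Next I would identify the optimal feedback. For the penalized problem the optimal control for $Y^{y,v}$ is the closed-loop law
\[
v_{t}^{N}\left(\theta\right)=-\left(I+K_{T-t}^{N}\right)^{-1}C\left(\theta\right)K_{T-t}^{N}Y_{t-}^{N},
\]
and passing to the limit $N\to\infty$ on the subspace where $K^{N}$ stays bounded gives a limiting feedback $v_{t}^{\infty}\left(\theta\right)$, still in $\mathcal{L}^{2}\left(p;\mathbb{R}^{n}\right)$, whose closed-loop trajectory $Y^{y,\infty}$ satisfies $\mathbb{E}\left[\int_{0}^{T}\left\vert \Pi_{\left[KerB^{\ast}\right]^{\bot}}Y_{t}^{y,\infty}\right\vert^{2}dt\right]=0$, i.e. $Y_{t}^{y,\infty}\in KerB^{\ast}$, $\mathbb{P}\otimes Leb$-a.s. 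This recovers the viability of $KerB^{\ast}$. To upgrade to viability of $Viab_{loc}\left(KerB^{\ast}\right)$ itself, I would argue by the semigroup/flow property: at $\mathbb{P}\otimes Leb$-a.e. $\left(\omega,t\right)$, the point $Y_{t}^{y,\infty}$ again admits (by restarting the same limiting feedback from that point, using that the Riccati solution is autonomous and the noise measure $\nu$ is $\gamma$-independent in the constant-coefficient Poisson setting) a control keeping the trajectory in $KerB^{\ast}$ on a further small interval — so $Y_{t}^{y,\infty}\in Viab_{loc}\left(KerB^{\ast}\right)$ by definition of the latter. Concretely, the same process $v^{\infty}$, shifted in time, witnesses local viability at $Y_{t}^{y,\infty}$.

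The main obstacle I anticipate is the uniform-in-$N$ bound on $K_{T}^{N}$ restricted to $Viab_{loc}\left(KerB^{\ast}\right)$ and the legitimacy of passing to the limit in the (nonlinear) Riccati equation and in the feedback $\left(I+K_{T-t}^{N}\right)^{-1}C\left(\theta\right)K_{T-t}^{N}$: one must check that the limit feedback still yields an $\mathcal{L}^{2}\left(p;\mathbb{R}^{n}\right)$ control and that no blow-up of $\left(I+K^{N}\right)^{-1}$ occurs along the relevant directions (it does not, since $I+K^{N}\geq I>0$). A secondary technical point is the measurable-selection / flow argument in the last step, ensuring that "restarting" the feedback from $Y_{t}^{y,\infty}$ is admissible; this is routine given the explicit structure of predictable processes already invoked in the proofs of Propositions \ref{PropNec2} and \ref{PropSuf1}. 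Everything else — the LQ verification identity, monotonicity of $K^{N}$, existence/uniqueness of the Riccati solutions — is standard and cited, so the argument reduces to these two limiting-procedure checks.
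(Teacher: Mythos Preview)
Your proposal correctly identifies the key tool --- the LQ verification identity for the Riccati family $K^{N}$ --- and your first step (uniform boundedness of $\left\langle K_{T}^{N}y,y\right\rangle$ for $y\in Viab_{loc}\left(KerB^{\ast}\right)$ via the cost of the constraining control) matches the paper exactly. The divergence is in how you upgrade from $KerB^{\ast}$-viability to $Viab_{loc}\left(KerB^{\ast}\right)$-viability.

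The paper does not construct a limiting feedback for this step, and does not need $K^{\infty}$ to solve any limiting Riccati equation. Instead it first establishes the two-sided identification
\[
Viab_{loc}\left(KerB^{\ast}\right)=\mathbb{K}:=\left\{y:\exists T>0,\ \lim_{N\rightarrow\infty}\left\langle K_{T}^{N}y,y\right\rangle<\infty\right\}
\]
(the inclusion $\mathbb{K}\subset Viab_{loc}$ uses only a uniform $\mathcal{L}^{2}$-bound on the feedback controls $v^{N}$ and weak sequential compactness in $\mathcal{L}^{2}\left(p;\mathbb{R}^{n}\right)\times\mathbb{L}^{2}$, not pointwise convergence of the feedback law). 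With this characterization in hand, the conclusion is immediate and requires \emph{no new control}: take the \emph{original} $v$ witnessing $y\in Viab_{loc}$; the same It\^{o} identity, now read at time $t>0$, gives $\mathbb{E}\left[\left\langle K_{T-t}^{N}Y_{t}^{y,v},Y_{t}^{y,v}\right\rangle\right]\leq\left\Vert v\right\Vert_{\mathcal{L}^{2}}^{2}$ uniformly in $N$, and Fatou's lemma yields $Y_{t}^{y,v}\in\mathbb{K}=Viab_{loc}\left(KerB^{\ast}\right)$, $\mathbb{P}$-a.s.

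Your restarting route, by contrast, is more fragile than you indicate. The dismissal of blow-up via $I+K^{N}\geq I$ only controls $\left(I+K^{N}\right)^{-1}$; the full feedback map $\left(I+K^{N}\right)^{-1}C\left(\theta\right)K^{N}$ need not converge pointwise where $K^{N}$ is unbounded, so a genuine ``limiting feedback'' $v^{\infty}$ in closed-loop form is not available --- one must pass to a weak limit, which is then no longer Markovian. At that point, ``shifting $v^{\infty}$ in time'' to witness local viability at the \emph{random, deterministic-valued} point $Y_{t}^{y,\infty}\left(\omega\right)$ becomes a regular-conditional-probability argument exploiting independence of Poisson increments, not the routine selection you describe; and if instead you insist on the feedback form, evaluating the limit Riccati object at $Y_{t-}^{y,\infty}$ presupposes that this point already lies where $K^{\infty}$ is finite, which is precisely the conclusion. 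All of this is bypassed by the paper's $\mathbb{K}$-characterization together with Fatou.
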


\begin{proof}
\underline{Step 1.} We claim that
\[
Viab_{loc}\left(  KerB^{\ast}\right)  =\mathbb{K}:=\left\{  \widetilde{y}\in%
\mathbb{R}
^{n}:\exists\widetilde{T}>0\text{ s.t. }\underset{N\rightarrow\infty}{\lim
}\left\langle K_{\widetilde{T}}^{N}\widetilde{y},\widetilde{y}\right\rangle
<\infty\right\}  .
\]

Using the definition of $Viab_{loc}\left(  KerB^{\ast}\right)  $, we are led
to consider, for $y\in Viab_{loc}\left(  KerB^{\ast}\right)  ,$ some $T>0$ and
$v\in\mathcal{L}^{2}\left(  p;%
\mathbb{R}
^{n}\right)  $ such that $Y^{y,v}\in KerB^{\ast},$ $\mathbb{P\otimes}Leb-a.s.$
on $\Omega\times\left[  0,T\right]  $. \ It\^{o}'s formula applied to
$\left\langle K_{T-s}^{N}Y_{s}^{y,v},Y_{s}^{y,v}\right\rangle $ for
$s\in\left[  t,T\right]  $ yields%
\begin{align*}
&  \mathbb{E}\left[  \left\langle K_{T-t}^{N}Y_{t}^{y,v},Y_{t}^{y,v}%
\right\rangle \right] \\
&  =\mathbb{E}\left[  \int_{t}^{T}\int_{E}\left\vert v_{s}\left(
\theta\right)  \right\vert ^{2}\nu\left(  d\theta\right)  ds\right]
+N\mathbb{E}\left[  \int_{t}^{T}\left\vert \Pi_{\left[  KerB^{\ast}\right]
^{\bot}}\left(  Y_{s}^{y,v}\right)  \right\vert ^{2}ds\right] \\
&  -\mathbb{E}\left[  \underset{\left[  t,T\right]  \times E}{\int}\left\vert
\left(  I+K_{T-s}^{N}\right)  ^{\frac{1}{2}}v_{s}\left(  \theta\right)
-\left(  I+K_{T-s}^{N}\right)  ^{-\frac{1}{2}}C\left(  \theta\right)
K_{T-s}^{N}Y_{s-}^{y,v}\right\vert ^{2}\nu\left(  d\theta\right)  ds\right]  .
\end{align*}
For $t=0$, it follows that
\[
\left\langle K_{T}^{N}y,y\right\rangle \leq\mathbb{E}\left[  \int_{0}^{T}%
\int_{E}\left\vert v_{s}\left(  \theta\right)  \right\vert ^{2}\nu\left(
d\theta\right)  ds\right]  ,\text{ for all }N\geq1,
\]
hence $y\in\mathbb{K}.$ Conversely, if $y\in\mathbb{K}$,$\ $and $T>0$ is such
that $\underset{N\rightarrow\infty}{\lim}\left\langle K_{T}^{N}%
y,y\right\rangle =:c<\infty$, one takes the feedback control sequence
\[
v_{s}^{N}\left(  \theta\right)  :=\left(  I+K_{T-s}^{N}\right)  ^{-1}C\left(
\theta\right)  K_{T-s}^{N}Y_{s-}^{y,v},
\]
to get
\[
\mathbb{E}\left[  \int_{0}^{T}\int_{E}\left\vert v_{s}^{N}\left(
\theta\right)  \right\vert ^{2}\nu\left(  d\theta\right)  ds\right]
+N\mathbb{E}\left[  \int_{0}^{T}\left\vert \Pi_{\left[  KerB^{\ast}\right]
^{\bot}}\left(  Y_{s}^{y,v^{N}}\right)  \right\vert ^{2}ds\right]  \leq c.
\]
Hence, one finds a (sub)sequence $\left(  v^{N},Y^{y,v^{N}}\right)  $ weakly
converging to some $\left(  v^{\ast},Y^{\ast}\right)  \in\mathcal{L}%
^{2}\left(  p;%
\mathbb{R}
^{n}\right)  \times\mathbb{L}^{2}\left(  \left[  0,T\right]  \times
\Omega,Prog^{0},Leb\otimes\mathbb{P};%
\mathbb{R}
^{n}\right)  .$ Linearity arguments allow one to identify $Y^{\ast
}=Y^{y,v^{\ast}}$ and the semicontinuity of the $\mathbb{L}^{2}$-norm w.r.t.
the weak topology to infer that $Y^{y,v^{\ast}}\in KerB^{\ast},$
$\mathbb{P\otimes}Leb-a.s.$ on $\Omega\times\left[  0,T\right]  $. We have,
thus, shown that $y\in Viab_{loc}\left(  KerB^{\ast}\right)  .$

\underline{Step 2.} We proceed as in the first part of the previous step. Let
us fix $y\in Viab_{loc}\left(  KerB^{\ast}\right)  ,$ $T>0$ and $v\in
\mathcal{L}^{2}\left(  p;%
\mathbb{R}
^{n}\right)  $ such that $Y^{y,v}\in KerB^{\ast},$ $\mathbb{P\otimes}Leb-a.s.$
on $\Omega\times\left[  0,T\right]  $. Whenever $t\leq T$,
\[
\mathbb{E}\left[  \left\langle K_{T-t}^{N}Y_{t}^{y,v},Y_{t}^{y,v}\right\rangle
\right]  \leq\mathbb{E}\left[  \int_{0}^{T}\int_{E}\left\vert v_{s}\left(
\theta\right)  \right\vert ^{2}\nu\left(  d\theta\right)  ds\right]  .
\]
Using Fatou's Lemma, one gets that
\[
\mathbb{E}\left[  \underset{N\rightarrow\infty}{\lim\inf}\left\langle
K_{T-t}^{N}Y_{t}^{y,v},Y_{t}^{y,v}\right\rangle \right]  \leq\mathbb{E}\left[
\int_{0}^{T}\int_{E}\left\vert v_{s}\left(  \theta\right)  \right\vert ^{2}%
\nu\left(  d\theta\right)  ds\right]  .
\]
Hence, $\underset{N\rightarrow\infty}{\lim\inf}\left\langle K_{T-t}^{N}%
Y_{t}^{y,v},Y_{t}^{y,v}\right\rangle <\infty,$ $\mathbb{P-}$a.s. and, using
the first step, $Y_{t}^{y,v}\in Viab_{loc}\left(  KerB^{\ast}\right)  ,$
$\mathbb{P-}$a.s. The conclusion follows by recalling that $Y_{\cdot}^{y,v}$
is c\`{a}dl\`{a}g.
\end{proof}

\begin{remark}
The definition of $\mathbb{K}$ can, equivalently, be given as
\[
\mathbb{K}=\left\{  \widetilde{y}\in%
\mathbb{R}
^{n}:\exists\widetilde{T}>0\text{ s.t. }\underset{N\rightarrow\infty}{\lim
}\left\langle K_{t}^{N}\widetilde{y},\widetilde{y}\right\rangle <\infty,\text{
for all }0\leq t\leq\widetilde{T}\right\}  .
\]
In particular, this implies that $Viab_{loc}\left(  KerB^{\ast}\right)
=\mathbb{K}$ is a linear subspace of $KerB^{\ast}.$
\end{remark}

As a consequence, we get the following criterion for the approximate and
approximate null-controllability in the case when the coefficients are constant.

\begin{criterion}
\label{CritEquiv}The system (\ref{SDE0'}) is approximately null-controllable
iff it is approximately controllable. The necessary and sufficient condition
for approximate controllability is that the largest subspace $V_{0}\subset
KerB^{\ast}$ which is $\left(  A^{\ast};\mathcal{C}^{\ast}\Pi_{V_{0}}\right)
$-invariant be reduced to $\left\{  0\right\}  $.
\end{criterion}

\begin{proof}
Approximate controllability implies approximate null-controllability, which,
by Proposition \ref{PropNec2}, implies that the largest subspace $V_{0}\subset
KerB^{\ast}$ which is $\left(  A^{\ast};\mathcal{C}^{\ast}\Pi_{V_{0}}\right)
$-invariant is reduced to $\left\{  0\right\}  $.

For the converse we proceed as in the proof of Proposition \ref{PropSuf1}. Let
us just hint the modifications needed to this proof. One begins by applying
It\^{o}'s formula to $\left(  I-\Pi_{Viab_{loc}\left(  KerB^{\ast}\right)
}\right)  Y_{t}^{y,v}$ (instead of $\left(  I-\Pi_{Ker\left(  B_{t}^{\ast
}\right)  }\right)  Y_{t}^{y,v}$). Whenever $B^{\ast}Y_{t}^{y,v}=0,$
$\mathbb{P\otimes}Leb$ almost everywhere on $\Omega\times\left[  0,T\right]
,$ it follows, due to the previous result, that $Y_{t}^{y,v}\in Viab_{loc}%
\left(  KerB^{\ast}\right)  $. Hence, reasoning as in Proposition
\ref{PropSuf1},
\[
0=\mathbb{E}\left[  \int_{0}^{t}\int_{E}\left\vert \left(  I-\Pi
_{Viab_{loc}\left(  KerB^{\ast}\right)  }\right)  v_{s}\left(  \theta\right)
\right\vert ^{2}\nu\left(  d\theta\right)  ds\right]  .
\]
Then $v_{t}\left(  \theta\right)  \in Viab_{loc}\left(  KerB^{\ast}\right)  ,$
$\mathbb{P}\otimes Leb\otimes\nu$ almost everywhere on $\Omega\times\left[
0,T\right]  \times E.$ Reasoning as in Proposition \ref{PropSuf1} (but this
time globally, not only prior to the first jump time), we get $Y_{t}^{y,v}\in
V$, almost everywhere on $\Omega\times\left[  0,T\right]  .$ Here $V\subset$
$KerB^{\ast}$ is the linear subspace
\begin{equation}
V=\left\{
\begin{array}
[c]{c}%
y^{\prime}\in KerB^{\ast}:\exists u\in\mathbb{L}^{2}\left(  E,\mathcal{B}%
\left(  E\right)  ,\nu;Viab_{loc}\left(  KerB^{\ast}\right)  \right)  \text{
s.t.}\\
A^{\ast}y^{\prime}+\mathcal{C}^{\ast}u\in V.
\end{array}
\right\}
\end{equation}
We recall that $y$ is arbitrary in $Viab_{loc}\left(  KerB^{\ast}\right)  $ to
get $Viab_{loc}\left(  KerB^{\ast}\right)  \subset V$ and, thus, $V$ is
included in the largest subspace $V_{0}$ of $KerB^{\ast}$ which is $\left(
A^{\ast};\mathcal{C}^{\ast}\Pi_{V_{0}}\right)  $-invariant. By assumption,
$V_{0}=\left\{  0\right\}  .$ Hence, $Y_{t}^{y,v}=0,$ almost everywhere on
$\Omega\times\left[  0,T\right]  $ which implies approximate controllability
of (\ref{SDE0'}). The proof is now complete.
\end{proof}

\begin{remark}
This result is the counterpart of \cite{Buckdahn_Quincampoix_Tessitore_2006}
in this jump-framework. As in the cited paper, it turns out that the two
notions of approximate controllability coincide. Moreover, their
characterization is given, similar to
\cite{Buckdahn_Quincampoix_Tessitore_2006}, in terms of strict invariance.
However, while in the Brownian setting the operators are finite, here we have
an operator $\mathcal{C}^{\ast}$ acting on $\mathbb{L}^{2}.$ Except the case
when $E$ is finite, algorithms are more difficult to implement in this setting.
\end{remark}

Since, in this particular framework, we are able to completely characterize
the (equivalent) approximate controllability properties, we can produce a
simple example in which the condition given by Proposition \ref{PropSuf1} is
not satisfied, yet the system is approximately controllable. Hence, although
sufficient, this condition is not (always) necessary in the study of
approximate controllability.

\begin{example}
\label{CtrlnotSuf1}We let $n=3,$ $d=1,$ $E=\left\{  0,1\right\}  ,$
$B:=\left(
\begin{array}
[c]{c}%
1\\
0\\
0
\end{array}
\right)  $, $\beta=0,$ $\lambda\left(  0\right)  =\lambda\left(  1\right)
=1,$ $Q\left(  0,d\theta\right)  =\delta_{1}\left(  d\theta\right)  ,$
$Q\left(  1,d\theta\right)  =\delta_{0}\left(  d\theta\right)  $. \ We denote,
as before, by $e_{1}=\left(
\begin{array}
[c]{c}%
1\\
0\\
0
\end{array}
\right)  ,e_{2}=\left(
\begin{array}
[c]{c}%
0\\
1\\
0
\end{array}
\right)  ,e_{3}=\left(
\begin{array}
[c]{c}%
0\\
0\\
1
\end{array}
\right)  .$ One easily notes that $Ker\left(  B^{\ast}\right)  =span\left(
e_{2},e_{3}\right)  .$ We let $A:=\left(
\begin{array}
[c]{ccc}%
0 & 0 & 0\\
1 & 0 & 0\\
0 & 1 & 0
\end{array}
\right)  ,$ $C\left(  0\right)  =C\left(  1\right)  =C:=\left(
\begin{array}
[c]{ccc}%
1 & 0 & 0\\
0 & 0 & 0\\
0 & 0 & 0
\end{array}
\right)  $. The reader is invited to note that $A^{\ast}\left(  ye_{2}%
+ze_{3}\right)  =ye_{1}+ze_{2}$, $C^{\ast}\left(  ye_{2}+ze_{3}\right)  =0 $
and $\left(  C^{\ast}+I\right)  \left(  ye_{2}+ze_{3}\right)  =ye_{2}+ze_{3}
$. Hence, the largest subspace $V_{0}\subset KerB^{\ast}$ which is $\left(
A^{\ast};\mathcal{C}^{\ast}\Pi_{V_{0}}\right)  $-invariant (i.e $A^{\ast}$-
invariant) is $V_{0}=\left\{  0\right\}  .$ It follows that the system
(\ref{SDE0'}) is approximately controllable. However, the space $span\left(
e_{3}\right)  $ is $\left(  A^{\ast};\left(  \mathcal{C}^{\ast}+\mathcal{I}%
\right)  \Pi_{KerB^{\ast}}\right)  $-invariant and, thus, the condition of
Proposition \ref{PropSuf1} is not satisfied.
\end{example}

\subsection{The Continuous Switching Systems\label{SubsectionContSwitching}}

The next class of systems we address is the case in which the dynamics are
only switched but no jump takes place on the $X$ component. In other words,
following the behavior of $\gamma,$ the matrix $A$ is different $\left(
A\left(  \gamma\right)  \right)  ,$ but a change in the mode $\gamma$ does not
induce an instantaneous change of the process $X$. This corresponds to the
particular choice $C=0.$ In this case, we consider a switch piecewise linear
system%
\begin{equation}
\left\{
\begin{array}
[c]{l}%
dX_{s}^{x,u}=\left[  A\left(  \gamma_{s}\right)  X_{s}^{x,u}+B_{s}%
u_{s}\right]  ds,\text{ }s\geq0,\\
X_{0}^{x,u}=x,
\end{array}
\right.  \label{SDE0''}%
\end{equation}
The reader is invited to note that whenever spikes appear, they do not modify
the current $X$ component but merely the vector field.

The main result in this framework is the following criterion.

\begin{criterion}
\label{CritContSwitch}The system (\ref{SDE0''}) is approximately
null-controllable if and only if the largest subspace of $Ker\left[  \left(
B^{0}\right)  ^{\ast}\right]  $ which is $A^{\ast}\left(  \gamma_{0}\right)  $
- invariant is reduced to the trivial subspace $\left\{  0\right\}  $ for all
$\gamma_{0}\in E.$
\end{criterion}

Let us give a proof that can be adapted for piecewise linear systems with
mutual influence (i.e. in the case when the jump mechanism of the process
$\gamma$ equally depends on $X$)$.$ We refer the interested reader to
\cite{Davis_86}, \cite{Soner86_2}, \cite{Dempster_89}, \cite{G7} for the exact
construction. We will not present the rigorous framework but only give the
proof in our case. The main difficulty in the general framework is due to the
fact that the filtration is not fixed prior to the construction of $X$ and
that $\gamma$ itself would depend on the control parameter through $X.$ In
order to get a Markov process, one would have to use piecewise open-loop
control policies (i.e., between consecutive jump times $T_{n}$ and $T_{n+1},$
the control would be of type $u_{n}\left(  t-T_{n},\gamma_{T_{n}},X_{T_{n}%
}\right)  $ and would not depend on $\left(  T_{k},\gamma_{T_{k}},X_{T_{k}%
}\right)  _{k\leq n-1}$). \ In our proof of sufficiency, we shall exhibit
stabilizing controls in the open-loop feedback form.

Let us assume that the initial matrix $B^{0}$ is independent of $\gamma$ and
that $A\left(  \gamma\right)  $ are self-adjoint and commute with $B^{0}$%
\begin{equation}
A\left(  \gamma\right)  =A^{\ast}\left(  \gamma\right)  \text{ and }A\left(
\gamma\right)  B^{0}\left(  B^{0}\right)  ^{\ast}=B^{0}\left(  B^{0}\right)
^{\ast}A\left(  \gamma\right)  , \label{commuting}%
\end{equation}
for all $\gamma\in E.$ This assumption is a technical one and implies, without
any further considerations, the commutativity of Gramians and, hence, the
positiveness of products.

\begin{proof}
The necessary condition follows from Proposition \ref{PropNec1} since, in this
framework, $C=0.$

For the converse, let us fix $\gamma_{0}\in E,$ $x_{0}\in%
\mathbb{R}
^{n}$, a time horizon $T>0$ and (for the time being) a discretization step
$N>0$. We also drop the dependency of $\gamma_{0}$ in $\mathbb{P}%
^{0,\gamma_{0}}.$ Since the largest subspace of $Ker\left[  \left(
B^{0}\right)  ^{\ast}\right]  $ which is $A^{\ast}\left(  \gamma\right)  $ -
invariant is reduced to the trivial subspace $\left\{  0\right\}  ,$ it
follows that the controllability Gramian
\[
\mathcal{G}\left(  \gamma,t\right)  :=\int_{0}^{t}e^{A\left(  \gamma\right)
\left(  t-s\right)  }B^{0}\left(  B^{0}\right)  ^{\ast}e^{A^{\ast}\left(
\gamma\right)  \left(  t-s\right)  }ds
\]
is invertible for all $t>0$. Due to (\ref{commuting}), one has, for every
$0\leq t\leq t^{\prime},$%
\begin{align*}
&  e^{A\left(  \gamma\right)  \left(  t-s\right)  }B^{0}\left(  B^{0}\right)
^{\ast}e^{A^{\ast}\left(  \gamma\right)  \left(  t-s\right)  }e^{A\left(
\gamma\right)  \left(  t^{\prime}-r\right)  }B^{0}\left(  B^{0}\right)
^{\ast}e^{A^{\ast}\left(  \gamma\right)  \left(  t^{\prime}-r\right)  }\\
&  =e^{A\left(  \gamma\right)  \left(  t^{\prime}-r\right)  }B^{0}\left(
B^{0}\right)  ^{\ast}e^{A^{\ast}\left(  \gamma\right)  \left(  t^{\prime
}-r\right)  }e^{A\left(  \gamma\right)  \left(  t-s\right)  }B^{0}\left(
B^{0}\right)  ^{\ast}e^{A^{\ast}\left(  \gamma\right)  \left(  t-s\right)  }.
\end{align*}
Integrating this equality with respect to the Lebesgue measure for $\left(
s,r\right)  \in\left[  0,t\right]  \times\left[  0,t^{\prime}\right]  ,$ one
gets%
\[
\mathcal{G}\left(  \gamma,t\right)  \mathcal{G}\left(  \gamma,t^{\prime
}\right)  =\mathcal{G}\left(  \gamma,t^{\prime}\right)  \mathcal{G}\left(
\gamma,t\right)  .
\]
Whenever $t^{\prime}>0,$ one also has%

\begin{equation}
\mathcal{G}\left(  \gamma,t\right)  \mathcal{G}^{-1}\left(  \gamma,t^{\prime
}\right)  =\mathcal{G}^{-1}\left(  \gamma,t^{\prime}\right)  \mathcal{G}%
\left(  \gamma,t\right)  . \label{commuteG}%
\end{equation}
Hence, in this commutating framework, $\mathcal{G}\left(  \gamma,t\right)
\mathcal{G}^{-1}\left(  \gamma,t^{\prime}\right)  $ is also positive definite
(except when $t=0$ and the product is $0).$ We define the (square norm)
minimal action (in time $\frac{T}{N}$)
\begin{align*}
u^{0}\left(  \gamma,y,t\right)   &  :=\left(  -e^{-\beta\gamma t}\left(
B^{0}\right)  ^{\ast}e^{A^{\ast}\left(  \gamma\right)  \left(  \frac{T}%
{N}-t\right)  }\mathcal{G}^{-1}\left(  \gamma,\frac{T}{N}\right)  e^{A\left(
\gamma\right)  \frac{T}{N}}y\right)  1_{t\leq\frac{T}{N}}\\
&  +01_{t>\frac{T}{N}},
\end{align*}
for every $\gamma\in E,$ $y\in%
\mathbb{R}
^{n},$ $t\geq0$ and consider the trajectory $X$ associated to the control
sequence $\left(  u^{n}=\left\{
\begin{array}
[c]{l}%
u^{0}\text{, if }N\geq n\geq1,\\
0\text{ \ , otherwise}%
\end{array}
\right.  \right)  _{n\geq1}$, i.e. to the control%
\[
u_{t}:=u^{1}\left(  \gamma_{0},x_{0},t\right)  1_{0\leq t\leq T_{1}}%
+\sum_{n\geq2}u^{n}\left(  \gamma_{T_{n-1}},X_{T_{n-1}},t-T_{n-1}\right)
1_{T_{n-1}<t\leq T_{n}}.
\]
(The trajectory is constructed $\omega$-wise between any two consecutive jump
times. Obviously, with this kind of construction, $u$ can also act on $\gamma
$, on $\lambda$ and on $Q$. For more detailed construction, the reader is
referred to \cite{Davis_86}, \cite{Soner86_2}, \cite{Dempster_89}, \cite{G7},
etc.). Then $X_{T}^{x_{0},u}=0$ on $\left(  T_{1}\geq\frac{T}{N}\right)
\cup\left(  T_{2}-T_{1}\geq\frac{T}{N}\right)  \cup...\cup\left(
T_{N}-T_{N-1}\geq\frac{T}{N}\right)  .$ Up to $T_{1}$, the explicit solution
of our equation with mode $\gamma_{0}$ and initial data $x_{0}$ is given by
\begin{align*}
&  X_{t}^{x_{0},u}\\
&  =\left(  I-\mathcal{G}\left(  \gamma_{0},t\right)  e^{A^{\ast}\left(
\gamma_{0}\right)  \left(  \frac{T}{N}-t\right)  }\mathcal{G}^{-1}\left(
\gamma_{0},\frac{T}{N}\right)  e^{A\left(  \gamma_{0}\right)  \left(  \frac
{T}{N}-t\right)  }\right)  e^{A\left(  \gamma_{0}\right)  t}x_{0}1_{t\leq
\frac{T}{n}\wedge T_{1}}.
\end{align*}
In view of (\ref{commuteG}) and (\ref{commuting}), we get that
\begin{align*}
&  \mathcal{G}\left(  \gamma_{0},t\right)  e^{A^{\ast}\left(  \gamma
_{0}\right)  \left(  \frac{T}{N}-t\right)  }\mathcal{G}^{-1}\left(  \gamma
_{0},\frac{T}{N}\right)  e^{A\left(  \gamma_{0}\right)  \left(  \frac{T}%
{N}-t\right)  }\\
&  =e^{A^{\ast}\left(  \gamma_{0}\right)  \left(  \frac{T}{N}-t\right)
}\mathcal{G}\left(  \gamma_{0},t\right)  \mathcal{G}^{-1}\left(  \gamma
_{0},\frac{T}{N}\right)  e^{A\left(  \gamma_{0}\right)  \left(  \frac{T}%
{N}-t\right)  }%
\end{align*}
is positive definite (or $0$ is $t=0$). On the other hand, since
\[
\mathcal{G}\left(  \gamma_{0},\frac{T}{N}\right)  \geq e^{A^{\ast}\left(
\gamma_{0}\right)  \left(  \frac{T}{N}-t\right)  }\mathcal{G}\left(
\gamma_{0},t\right)  e^{A\left(  \gamma_{0}\right)  \left(  \frac{T}%
{N}-t\right)  },
\]
it follows that
\[
0\leq\left(  I-\mathcal{G}\left(  \gamma_{0},t\right)  e^{A^{\ast}\left(
\gamma_{0}\right)  \left(  \frac{T}{N}-t\right)  }\mathcal{G}^{-1}\left(
\gamma_{0},\frac{T}{N}\right)  e^{A\left(  \gamma_{0}\right)  \left(  \frac
{T}{N}-t\right)  }\right)  \leq I
\]
which implies $\left\vert X_{t}^{x_{0},u}\right\vert \leq e^{a_{0}t}\left\vert
x_{0}\right\vert ,$ for all $t\leq\frac{T}{N}\wedge T_{1}.$ Here, $a_{0}$
stands for an upper bound of the norm of linear operators $A\left(
\gamma\right)  $. Repeating this argument, it follows that
\[
\left\vert X_{t}^{x_{0},u}\right\vert \leq e^{a_{0}t}\left\vert x_{0}%
\right\vert ,
\]
for all $t\geq0$. We deduce that%
\begin{align*}
&  \mathbb{E}\left[  \left\vert X_{T}^{x_{0},u}\right\vert ^{2}\right] \\
&  \leq e^{2a_{0}T}\left\vert x_{0}\right\vert ^{2}\mathbb{P}\left(  \left(
T_{1}<\frac{T}{N}\right)  \cap\left(  T_{2}-T_{1}<\frac{T}{N}\right)
\cap...\cap\left(  T_{N}-T_{N-1}<\frac{T}{N}\right)  \right) \\
&  \leq e^{2a_{0}T}\left\vert x_{0}\right\vert ^{2}\left(  1-e^{-\frac{c_{0}%
T}{N}}\right)  ,
\end{align*}
where $c_{0}$ is an upper bound for the jump intensities $\lambda\left(
\gamma\right)  $. Since $N>0$ is arbitrary, it follows that our system is
approximately null-controllable and our proof is complete.
\end{proof}

Finally, we note that, in this particular setting, the Riccati equation would
be a true, backward one and have the form
\[
\left\{
\begin{array}
[c]{l}%
dK_{t}^{N}\\
=\left(  K_{t}^{N}A^{\ast}\left(  \gamma_{t}\right)  +A\left(  \gamma
_{t}\right)  K_{t}^{N}-N\Pi_{\left[  Ker\left(  B^{\ast}\left(  \gamma
_{t}\right)  \right)  \right]  ^{\bot}}\right)  dt+\int_{E}H_{t}^{N}\left(
\theta\right)  q\left(  dt,d\theta\right) \\
+\int_{E}\lambda\left(  \gamma_{t}\right)  H_{t}^{N}\left(  \theta\right)
^{\ast}\left(  I+K_{t}^{N}+H_{t}^{N}\left(  \theta\right)  \right)  ^{-1}%
H_{t}^{N}\left(  \theta\right)  Q\left(  \gamma_{t},d\theta\right)  dt,\text{
}t\in\left[  0,T\right]  .\\
K_{T}^{N}=0,\text{ }I+K_{t}^{N}+H_{t}^{N}\left(  \theta\right)  >0,\text{ for
almost all }t\in\left[  0,T\right]  .
\end{array}
\right.
\]
Unfortunately, the same approach as in the previous section fails to work
since we are unaware of a reference for the results on this type of equation.

\section{Conclusions and Perspectives\label{SectionConclusionsPerspectives}}

In this paper we have studied the approximate null-controllability property
for a class of jump Markov piecewise linear switched processes. These models
are inspired by lytic cycles in biological systems and prediction problems in
mathematical finance (energy markets). We have exhibited several necessary
conditions and established specific frameworks of sufficiency of these
algebraic criteria (independent coefficients or continuous jump mechanism).
Counterexamples are provided in the general case establishing the independence
of these conditions. We have tried to emphasize the stochastic characteristics
of our problem. We have shown that, as soon as jumps occur, the
controllability cannot be inferred from the analogous property of the related
deterministic systems. For the general framework, we have also proposed a
sufficiency criterion again based on algebraic invariance. Although very close
to the necessary criteria, this sufficient condition can be weakened in the
case of constant coefficients (in Example \ref{CtrlnotSuf1}). Our approach
shows that weaker invariance is sufficient as soon as local viability with
respect to the forward system can be obtained.

In order to relax the sufficient condition (following the method described for
constant coefficients in Subsection \ref{SubsectionConstantCoeff}), one would
try to characterize the local viability kernel of $Ker\left[  \left(
B^{0}\left(  \gamma_{0}\right)  \right)  ^{\ast}\right]  $. To this purpose,
one would be led to consider the backward stochastic Riccati equation%
\[
\left\{
\begin{array}
[c]{l}%
dK_{t}^{N}\\
=\left(  K_{t}^{N}A^{\ast}\left(  \gamma_{t}\right)  +A\left(  \gamma
_{t}\right)  K_{t}^{N}-N\Pi_{\left[  Ker\left(  B^{\ast}\left(  \gamma
_{t}\right)  \right)  \right]  ^{\bot}}\right)  dt+\int_{E}H_{t}^{N}\left(
\theta\right)  q\left(  dt,d\theta\right) \\
+\int_{E}\lambda\left(  \gamma_{t}\right)  \left(  f_{t}^{N}\left(
\theta\right)  \right)  ^{\ast}\left(  I+K_{t}^{N}+H_{t}^{N}\left(
\theta\right)  \right)  ^{-1}f_{t}^{N}\left(  \theta\right)  Q\left(
\gamma_{t},d\theta\right)  dt,\\
\text{where }f_{t}^{N}\left(  \theta\right)  :=\left(  C\left(  \theta\right)
K_{t}^{N}-H_{t}^{N}\left(  \theta\right)  \right)  ,\text{ }t\in\left[
0,T\right]  .\\
K_{T}^{N}=0,\text{ }I+K_{t}^{N}+H_{t}^{N}\left(  \theta\right)  >0,\text{ for
almost all }t\in\left[  0,T\right]  .
\end{array}
\right.
\]
Of course, the existence of a unique solution to this equation in the general
case is quite a difficult problem. For particular problems, we hope to be able
to give a solution by aggregating deterministic solutions (following the
approach for less restrictive one-dimensional BSDE given in
\cite{Confortola_Fuhrman_Jacod_2014}). Since all our counterexamples concern
bimodal systems, we are currently working on algebraic conditions which are
intermediate between the necessary and sufficient criteria proposed in the
present paper. This class is also relevant for the link between approximate
and approximate null-controllability. Finally, future work will concern
branching piecewise linear mechanisms to respond to the intuitive prediction
method in energy markets.

\textbf{Acknowledgement.} The work of the first author has been partially
supported by he French National Research Agency project PIECE, number
\textbf{ANR-12-JS01-0006.}

\bibliographystyle{plain}
\bibliography{bibliografie_06092014}

\end{document}